\newif\ifcomments
\numberwithin{equation}{section}
\theoremstyle{plain}
\newtheorem{theorem}{Theorem}[section]
\newtheorem{remark}[theorem]{Remark}
\newtheorem{lemma}[theorem]{Lemma}
\newtheorem{proposition}[theorem]{Proposition}
\theoremstyle{definition}
\theoremstyle{remark}
\def\ed{\mathrm{d}}
\def\exp{\operatorname{exp}}
\let\bs=\boldsymbol
\let\mc=\mathcal
\begin{document}
\title[Gradient flows of interacting cells as discrete porous media flows]{Gradient flows of interacting Laguerre cells as discrete porous media flows}
\author{Andrea Natale}
\address{Andrea Natale (\href{mailto:andrea.natale@inria.fr}{\tt andrea.natale@inria.fr}), Inria, Univ. Lille, CNRS, UMR 8524 - Laboratoire Paul Painlevé, F-59000 Lille, France
} 
\date{\today}
\maketitle

\begin{abstract}
We study a class of discrete models in which a collection of particles evolves in time following the gradient flow of an energy depending on the cell areas of an associated Laguerre (i.e.\ a weighted Voronoi) tessellation.
We consider the high number of cell limit of such systems and, using a modulated energy argument, we prove  convergence towards smooth solutions of nonlinear diffusion PDEs of porous medium type.  
\end{abstract}

\section{Introduction}

Voronoi and Laguerre tessellations are a popular tool to describe the neighborhood relations within particle systems, and therefore to model the particle interactions and dynamics. For instance, they have been used as a model for biological cells \cite{honda1983geometrical,bock2010generalized}, to describe the regions of influence of different agents in territorial models in ecology  \cite{votel2009equilibrium}, or also as discretization tools in continuum mechanics and fluid dynamics \cite{gallouet2022convergence,leclerc2020lagrangian}. 
This article focuses on a specific class of models
in which the particles evolution in space is governed by the gradient flow of an energy depending on a Laguerre decomposition of a given domain. Our primary interest is to investigate the high number of particles limit of these models, and show how to interpret the particle dynamics as a discrete version of porous media flow, reproducing its Lagrangian gradient flow structure \cite{evans2005diffeomorphisms}. Adopting this point of view, we will establish quantitative estimates for the convergence of the discrete models to their continous counterparts.

\subsection{Problem description} Given $N\in \mathbb{N}$, a tessellation $\mc{L}=  \{L_1,\ldots, L_N\}$ of a measurable bounded set $A\subset \mathbb{R}^d$ is a collection of a finite number of measurable subsets $L_i\subseteq A$, called cells,  such that 
\[
\mathrm{int}(L_i) \cap \mathrm{int}(L_j)= \varnothing \quad \forall \, i\neq j\,, \quad \text{and} \quad \bigcup_{i} L_i = A\,. \]
We call $\mathbb{T}_N(A)$ the set of tessellations of $A$ composed of $N$ cells, and 
\[
\mathbb{T}_N^s(A) \coloneqq \{ \mc{L} \,:\, \exists\, B\subseteq A \text{ such that }  \mc{L}\in \mathbb{T}_N(B)\}\,.
\]
Given a compact domain $\Omega\subset \mathbb{R}^d$ with Lipschitz boundary, we study the dynamics of $N$ interacting cells, represented by a tessellation in $\mathbb{T}_N^s(\Omega)$, and whose location is parameterized by a vector of cell centers (or particles) $X = (x_1,\ldots,x_N)\in {(\mathbb{R}^d)}^N$. 
Specifically, for a given subset of admissible tessellations $\mathbb{L}_N(\Omega) \subseteq \mathbb{T}_N^s(\Omega)$ and a fixed parameter $\varepsilon>0$, we consider the following energy:
\begin{equation}\label{eq:FepsX}
F_\varepsilon(X) \coloneqq \inf_{\mc{L} \in\mathbb{L}_N(\Omega)} \sum_{i} \int_{L_i}\frac{|x-x_i|^2}{2\varepsilon} \, \ed x  + \sum_i C_i(|L_i|)\,.
\end{equation}
In practice, we will focus on the two cases where $\mathbb{L}_N(\Omega)$ is either $\mathbb{T}_N(\Omega)$ (the union of the cells is fixed) or $\mathbb{T}_N^s(\Omega)$ (the union of the cells is is not determined a priori, but it is rather the set that yields the minimal value of problem \eqref{eq:FepsX}). 
Loosely speaking, the first term in \eqref{eq:FepsX} measures how  closely the tessellation approximates the particle distribution. The second term is the energy of the tessellation which we suppose to depend only on the cell volumes, $C_i:[0,\infty)\rightarrow \mathbb{R}$ being a given function which might be different for each cell. For example, a common choice for $C_i$ used to model biological cells, is the (non)linear spring model 
\[
C_i(a) = C(a) = K(a) \frac{ |a - \bar{a}|^2}{2}\,  \quad \forall\, i\,,
\]
where $K(a)>0$ is a scaling factor called bulk modulus and which might depend on $a$, and $\bar{a} = |\Omega|/N$ is the target volume which is assumed common to all cells.

The dynamics of the cell centers on the time interval $[0,T]$ is governed by the gradient flow of $F_\varepsilon$, with respect to a weighted 
$l^2$ metric on $(\mathbb{R}^d)^N$, defined as follows:
\begin{equation}\label{eq:inner}
\langle \dot{X},\dot{Y} \rangle_{m^0} \coloneqq \sum_i m_i^0 \langle \dot{x}_i , \dot{y}_i \rangle
\end{equation}
for all $\dot{X},\dot{Y} \in (\mathbb{R}^d)^N$ and where $m^0 \coloneqq \{m_1^0,\ldots, m^0_N\} \in \mathbb{R}^N_{>0}$. More precisely the evolution of the cell centers is given by a curve $X:[0,T] \rightarrow (\mathbb{R}^d)^N$ satisfying 
  \begin{equation}\label{eq:gfdiscrete}
 \dot{X}(t) = - \nabla_{m^0} F_\varepsilon(X(t)) 
 \end{equation}
for all $t\in(0,T)$, with a given initial condition $X(0)= X^0 =(x^0_i) \in (\mathbb{R}^d)^N$, where $\nabla_{m^0}$ denotes the gradient with respect to \eqref{eq:inner}.
 
\subsection{Relation with Laguerre tessellations and other discrete models} A Laguerre tessellation (also called power diagram or weighted Voronoi tessellation) is a tessellation of the domain $\mc{L}(X,w) =\{ L_i(X,w)\}_i \in \mathbb{T}_N(\Omega)$ parameterized by a set of particles $X =(x_1,\ldots,x_N)\in (\mathbb{R}^d)^N$ and associated weights $w=(w_1,\ldots, w_N)\in \mathbb{R}^N$, and in which the cells $L_i(X,w)\subset \Omega$ are defined as follows 
\[
L_i(X,w) \coloneqq \{ x \in \Omega\,:\, |x-x_i|^2  - w_i \leq |x - x_j|^2 - w_j\, \quad \forall j \neq i \}\,.
\] 
We will also refer to $x_i$ as the cell center of the cell $L_i(X,w)$. Note that if $w$ is a constant vector then we retrieve the standard Voronoi tessellation of $\Omega$. 

The Voronoi tessellation can be shown to be the unique minimizer of the energy \eqref{eq:FepsX} when $C_i =0$ and $\mathbb{L}_N(\Omega) = \mathbb{T}_N(\Omega)$. In this case, the remaining term in \eqref{eq:FepsX} is sometimes referred to as centroidal Voronoi tessellation energy \cite{du1999centroidal,liu2009centroidal}, and the resulting model governed by \eqref{eq:gfdiscrete} coincides with the Voronoi liquid described in \cite{ruscher2016voronoi}, which is a fluid dynamic interpretation of the Lloyd's algorithm \cite{lloyd1982least,merigot2021non}.

In general,  the tessellations that solves the minimization problem \eqref{eq:FepsX} with $\mathbb{L}_N(\Omega) = \mathbb{T}_N(\Omega)$ is always a Laguerre tessellation, and when $\mathbb{L}_N(\Omega) = \mathbb{T}^s_N(\Omega)$ is contained in one (in particular each cell of the optimal tessellation is the intersection of a Laguerre cell with a ball); see Section \ref{sec:energy}. Our discrete model is therefore related to cell evolution models based on Laguerre tessellations \cite{jones2012modeling}. In Voronoi cell models, for example, one imposes the tessellation to be Voronoi and the energy of the system only contains the second term in \eqref{eq:FepsX} (plus extra energy terms often related to the cell perimeter or nodes distance). While some studies have focused on analyzing specific features of these discrete flows (e.g., their long term behaviour \cite{elsey2015mean}), estabilishing continuous limits for such models is not trivial, partly because the energies considered are usually more complex than those we treat here. Available results are therefore limited to 1d \cite{fozard2010continuum} or formal calculations \cite{alt2003nonlinear}. In this light, the energy \eqref{eq:FepsX} leads to a modified dynamics which is however more amenable to theoretical analysis, at least for the case of energies only depending on the cell area.

\subsection{Relation with Lagrangian discretizations of porous media flow}\label{sec:relationporous} In the following, we focus on the case where
\begin{equation}\label{eq:CU}
C_i(a) \coloneqq \left\{ 
\begin{array}{ll} 
\displaystyle U\left(\frac{m_i^0}{a}\right) a & \text{if } a>0\,,\\
+\infty & \text{otherwise}\,,
\end{array}\right.
\end{equation}
where $U:[0,\infty) \rightarrow \mathbb{R}$ is a smooth strictly convex function with superlinear growth, with $U(0)=0$. For this energy, the particle dynamics generated by \eqref{eq:gfdiscrete} can be reinterpreted as a spatially discrete version of the Lagrangian formulation of the porous medium equation, describing the evolution of a density $\rho:[0,T]\times \Omega \rightarrow [0,\infty)$ as the solution of the PDE:
\begin{equation}\label{eq:pde0}
\left\{
\def\arraystretch{1.5}
\begin{array}{ll}
\partial_t \rho - \mathrm{div}\left[ \rho \nabla U'(\rho)\right]= 0 & \text{ on }(0,T) \times \Omega \,,\\
\nabla U'(\rho) \cdot n_{\partial \Omega} = 0 &\text{ on }(0,T) \times \partial \Omega\,,
\end{array}
\right.
\end{equation} 
where $n_{\partial \Omega}$ denotes the unit normal to the boundary $\partial \Omega$, with given initial conditions $\rho(0,\cdot) = \rho^0$. 

To make this precise, let us fix a smooth strictly-positive reference density $\nu:\Omega \rightarrow (0,\infty)$, and consider the energy $\mc{F}: \mathrm{Diff}(\Omega) \rightarrow \mathbb{R}$ on the space of diffeomorphisms of $\Omega$, defined by
\begin{equation}\label{eq:Fphi}
\mc{F}(\varphi) = \int_\Omega U\left(\frac{\nu}{\mathrm{det}(\nabla \varphi)}\right)\circ \varphi^{-1}  \ed x\,.
\end{equation}
Given $\Phi\in\mathrm{Diff}(\Omega)$ such that 
\begin{equation}\label{eq:rho0push}
\rho^0 = \frac{\nu}{\mathrm{det}(\nabla \Phi)} \circ \Phi^{-1}\,,
\end{equation}
one can check that, at least formally, the flow $\varphi: [0,T] \rightarrow \mathrm{Diff}(\Omega)$ of the vector field $-\nabla U'(\rho)$ solves the gradient flow system
\begin{equation}\label{eq:gflag}
\left\{ \begin{array}{l}
\partial_t \varphi(t) = - \nabla_{\mathbb{F}}\mc{F}(\varphi(t))\\
\varphi(0) = \Phi 
\end{array}
\right.
\text{ and }\quad
\rho(t) = \frac{\nu}{\det \nabla \varphi(t)}\circ \varphi(t)^{-1}\,,
\end{equation}
where $\mathbb{F} \coloneqq L^2_\nu(\Omega;\mathbb{R}^d)$ and therefore $\nabla_{\mathbb{F}}$ is the gradient computed with respect to the $L^2$ inner product weighted by $\nu$ (see \cite{evans2005diffeomorphisms} for details on this interpretation, or also Appendix \ref{sec:applag}).

Let us now fix a reference tessellation $\mc{T}_N = \{T_i\}_i \in \mathbb{T}_N(\Omega)$, and let $\mathbb{F}_N$ be the space of piecewise constant functions on the tessellation with values in $\mathbb{R}^d$, i.e.\,
\[
\mathbb{F}_N \coloneqq \{\varphi^X \in \mathbb{F}: \varphi^X(x) = x_i \in \mathbb{R}^d ~\text{  for a.e.\ } x \in T_i \}\,.
\]
Any element $\varphi^X\in \mathbb{F}_N$ can be identified with a collection of particles $X=(x_i)_i\in(\mathbb{R}^d)^N$ given by the collection of images of the cells $T_i$. Then, a general strategy to construct a particle discretization of equation \eqref{eq:gflag} is to look for solutions $\varphi^X:[0,T]\rightarrow \mathbb{F}_N$ of 
\begin{equation}\label{eq:gfphix}
\partial_t \varphi^X(t) = -\nabla_{\mathbb{F}_N}\tilde{\mc{F}}(\varphi^X(t))\,,
\end{equation}
where $\tilde{\mc{F}}:\mathbb{F}_N\rightarrow \mathbb{R}$ is an appropriate discrete version of $\mc{F}$, and with $\varphi^X(0)$ being an approximation of $\Phi$ in $\mathbb{F}_N$. Different choices of $\tilde{\mc{F}}$ lead to different methods. We refer to \cite{carrillo2021lagrangian} for a review of possible strategies in this context (see in particular \cite{carrillo2017numerical}, for an approach that is particularly close to the one we study in this article). Importantly, our discrete model \eqref{eq:gfdiscrete} is equivalent to \eqref{eq:gfphix} for an appropriate variational regularization of the energy (see again Appendix \ref{sec:applag} for details). Such a regularization is related to recent approaches based on semi-discrete optimal transport recalled in Section \ref{sec:semidiscrete}.

\subsection{Relation with semi-discrete optimal transport} \label{sec:semidiscrete}
The energy in \eqref{eq:FepsX} admits a reformulation based on semi-discrete optimal transport \cite{merigot2021optimal}. In fact, denoting by $W_2(\mu_1,\mu_2)$ the $L^2$ Wasserstein distance between two positive measures $\mu_1,\mu_2\in \mc{M}_+(\Omega)$, one can show that if $x_i\neq x_j$ for all $i\neq j$,
\begin{equation}\label{eq:FepsXw2}
F_\varepsilon(X) = \inf_{a\in \mathbb{R}^N_{> 0},\eta \in \mc{C}} \frac{W_2^2 \Big( \sum_{i} a_i\delta_{x_i}, \eta \Big)}{2\varepsilon} + \sum_i U\left(\frac{m^0_i}{a_i}\right){a_i}\,,
\end{equation}
where we used for $C_i$ the expression given in \eqref{eq:CU}, and where $\mc{C}$ is a convex subset of $\mc{M}_+(\Omega)$: in particular, $\mathcal{C} = \{\mathrm{Leb}\}$ (where $\mathrm{Leb}$ is the Lebesgue measure on $\Omega$) if $\mathbb{L}_N(\Omega)= \mathbb{T}_N(\Omega)$, and $\mathcal{C} = \{ f\ed x\,:\, f:\Omega\rightarrow [0,1] \}$ if $\mathbb{L}_N(\Omega) = \mathbb{T}_N^s(\Omega)$ (see Appendix \ref{sec:otapp} for a proof).

The functional in \eqref{eq:FepsXw2}, before minimization over $a$, has already appeared in the literature, for applications in material science or optimal planning \cite{bourne2015centroidal, buttazzo2009mass}, but with a focus on studying or computing its minimizers.

In this work, we rather regard \eqref{eq:FepsXw2} as an approximation of \eqref{eq:Fphi}, and system \eqref{eq:ode} as a deterministic particle discretization of \eqref{eq:pde0}. The idea of using deterministic particle methods to discretize diffusion models was already put forward by Russo in \cite{russo1990deterministic}.    
The approach of constructing such methods by regularizing the energy via a variational problem of the same type of \eqref{eq:FepsXw2}, however, was proposed by Brenier in \cite{brenier2000derivation} for discretizing the incompressible Euler equations, and later developed using semi-discrete optimal transport tools in \cite{merigot2016minimal,gallouet2018lagrangian}. In \cite{gallouet2022convergence,leclerc2020lagrangian} these methods were further developed to discretize the same nonlinear diffusion models considered in this article as well as the compressible (barotropic) Euler equations,
and in  \cite{sarrazin2022lagrangian} in the context of mean field games. In these works, one regards the energy of the system \eqref{eq:Fphi} as a function of the density, and considers its Moreau-Yosida regularizations on the space of probability measures $\mc{P}(\Omega)$ with respect to the $W_2$ distance. This yields a very similar expression to \eqref{eq:FepsX}:
\begin{equation}\label{eq:ftilde}
	\tilde{F}_\varepsilon(X) \coloneqq \inf_{\rho \in \mc{P}^{ac} (\Omega)} \frac{W_2^2 \Big( \sum_i m^0_i \delta_{x_i}, \rho \Big)}{2\varepsilon} + \int_\Omega U(\rho)\,,
\end{equation}
where $\mc{P}^{ac}(\Omega)$ is the set of absolutely continuous probability measures on $\Omega$.

Note that the advantage of using \eqref{eq:FepsXw2} with respect to \eqref{eq:ftilde} is that the first implies a piece-wise constant density reconstruction (see equation \eqref{eq:muNfun} below) which is easier to handle numerically than the minimizers of \eqref{eq:ftilde} whose structure strongly depends on $U$. Moreover the variational definition \eqref{eq:ftilde} is also easier to generalize to more complex energies (see Remark \ref{rem:generalizations}). Finally, we remark that another discretization strategy similar to ours, which also leads to piece-wise constant densities on Laguerre cells, was proposed in \cite{benamou2016discretization}, but the aim of the latter work was not to derive quantitative convergence estimates as we do here.

\subsection{Continuous limit} 

For a given initial condition $X^0$ such that $x^0_i\neq x^0_j$ for all $i\neq j$, let $t\in[0,T] \mapsto X(t)$ solve system \eqref{eq:gfdiscrete} with energy \eqref{eq:CU} and 
\[
m^0_i = \int_{T_i} \nu = \int_{\Phi(T_i)} \rho^0 \,,
\]
where $\rho^0$ satisfies \eqref{eq:rho0push} as before. 
Consider the discrete density
\begin{equation}\label{eq:muNfun}
\bar{\mu}_N(t,\cdot) \coloneqq \sum_i \frac{m^0_i}{|L_i(t)|} \bs{1}_{L_i(t)} \,,
\end{equation}
where $L_i(t)$ is the unique (see Section \ref{sec:dualformulation}) optimal tessellation for problem \eqref{eq:FepsX} associated with the positions $X(t)$, and where $\bs{1}_{L_i(t)}$ is the  characteristic function of the set $L_i(t)$. {Let $\varphi^X_N: [0,T] \times \Omega \rightarrow \mathbb{R}^d$ be the map defined by
\begin{equation}\label{eq:phiNX}
\varphi^X_N(t,x) = X_i(t) \,, \quad \text{for a.e. } x\in T_i \,,
\end{equation}
and for all $t\in[0,T]$ and $i=1,\ldots,N$, with $\mc{T}_N = \{T_i\}_i \in \mathbb{T}_N(\Omega)$ being a fixed reference tessellation as in Section \ref{sec:relationporous}.
}
Our main result states that $\bar{\mu}_N$ converges to sufficiently smooth solutions $\rho$ of \eqref{eq:pde0} and $\varphi^X$ converges to the flow of $-\nabla U'(\rho)$, as long as the error in the initial conditions, measured by  
\begin{equation}\label{eq:initialcond0}
\delta_N^2 \coloneqq \sum_i  \int_{T_i} |\Phi(x) - x_i^0|^2 \nu(x) \ed x\,,
\end{equation}
and $\varepsilon$ go to zero with appropriate rates.

\begin{theorem}\label{th:convergence} Let $U:[0,\infty) \rightarrow \mathbb{R}$ be a smooth strictly convex function, with $U(0)=0$,  satisfying the assumptions of Lemma \ref{lem:Abound} and suppose that there exist $R,\alpha>1$ and $\beta>0$, such that 
\[
U(r)- \inf U \geq \beta r^\alpha \quad \forall\, r\geq R\,.
\]
Suppose that $\rho:[0,T]\times \Omega \rightarrow [0,\infty)$ is a strong  solution of \eqref{eq:pde0}, such that $\rho^0 : \Omega \rightarrow [\rho_{min},\infty)$ with $\rho_{min}>0$ is of class $C^{1,1}$ and  satisfies \eqref{eq:rho0push}, and $\nabla U'(\rho)$ is of class $C^{2,1}$ in space, uniformly in time. { Let $\varphi:[0,T]\times \Omega \rightarrow \Omega$ be the flow of $-\nabla U'(\rho)$,  satisfying for all $t\in(0,T)$ and $x\in\Omega$,\[
\partial_t \varphi(t,x) = -\nabla U'(\rho(t,\varphi(t,x)))\,, \quad \varphi(0,x) = \Phi(x)\,.
\]}%
Moreover, let $\bar{\mu}_N:[0,T]\times \Omega \rightarrow [0,\infty)$ { and $\varphi_N^X:[0,T]\times \Omega\rightarrow \mathbb{R}^d$ } be the maps defined in \eqref{eq:muNfun} and \eqref{eq:phiNX} via system \eqref{eq:gfdiscrete}, with energy \eqref{eq:CU}, 
and suppose that $C_0^{-1}/ N \leq m^0_i \leq C_0/N$ for a constant $C_0>0$ and for all $1\leq i\leq N$. 
Then
\begin{equation}\label{eq:est}
\max_{t\in[0,T]}\left\{ { \|\varphi_N^X(t,\cdot) -\varphi(t,\cdot) \|^2_{L^2(\nu)} }+\int_\Omega U(\bar{\mu}_N(t,\cdot)|\rho(t,\cdot))\right\} \leq  C \left( \frac{\delta_N^2}{\varepsilon} + \varepsilon^{p-1} \right)\,, 
\end{equation}
where $p = \min\{2,\alpha\}$, 
$U(r|s) \coloneqq  U(r) - U(s) - U'(s)(r-s)$ for all $r\geq 0$ and $s>0$, and where the constant $C>0$ only depends on  $\sup_{t\in[0,T]}\|\nabla U'(\rho(t,\cdot)\|_{C^{2,1}}$, $\|\rho^0\|_{C^{1,1}}$, $\rho_{min}$, $C_0$, $\mathrm{diam}(\Omega)$, $U$, $T$, and $d$.
\end{theorem}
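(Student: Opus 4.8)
The plan is to run a modulated energy (relative entropy) argument adapted to this Lagrangian discretization. Define the modulated energy functional comparing the discrete flow $X(t)$ to the continuous Lagrangian flow $\varphi(t)$ of the vector field $-\nabla U'(\rho)$: namely, set
\[
\mc{E}_N(t) \coloneqq \frac{1}{2\varepsilon}\sum_i \int_{T_i} |\varphi(t,x) - x_i(t)|^2 \nu(x)\ud x + \Big( F_\varepsilon(X(t)) - \mc{F}(\varphi(t)) - \text{(linearization terms)} \Big),
\]
where the bracketed expression is designed so that $\mc{E}_N(t) \geq c\int_\Omega U(\bar\mu_N(t,\cdot)|\rho(t,\cdot))$ up to controllable errors. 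The first (kinetic) term is essentially the squared $\mb{F}$-distance between $\varphi^{X(t)}$ and $\varphi(t)$, and at $t=0$ it equals $\delta_N^2/(2\varepsilon)$ by \eqref{eq:initialcond0}. The key structural input is that both flows are gradient flows: $X(t)$ solves \eqref{eq:gfdiscrete} and $\varphi(t)$ solves \eqref{eq:gflag}, and the energy $F_\varepsilon$ is (by the discussion in Section \ref{sec:applag}) exactly the discrete analogue of $\mc{F}$ plus the Moreau–Yosida/semi-discrete-OT regularization of size $\varepsilon$.

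The main steps, in order: (i) Differentiate $\mc{E}_N(t)$ in time. For the kinetic term, use $\partial_t\varphi = -\nabla U'(\rho)\circ\varphi$ and $\dot X = -\nabla_{m^0}F_\varepsilon(X)$; the cross term produces $\frac{1}{\varepsilon}\sum_i m_i^0\langle \varphi(t,\cdot)-x_i, \nabla U'(\rho)\circ\varphi\rangle$-type contributions plus the pairing of $\varphi - \varphi^X$ with $\nabla_{m^0}F_\varepsilon(X)$. (ii) Identify $\nabla_{m^0}F_\varepsilon(X)$ using the dual/Laguerre structure of Section \ref{sec:energy}–\ref{sec:dualformulation}: the optimality conditions for the inner infimum in \eqref{eq:FepsX} give that, on the optimal tessellation $L_i(t)$, the derivative involves $U'(m_i^0/|L_i|)$ and the barycenter $b_i$ of $L_i$, so $\varepsilon\,\partial_{x_i}F_\varepsilon = m_i^0(x_i - b_i)$. (iii) Feed in the PDE: since $\rho$ is a strong solution with $\nabla U'(\rho)\in C^{2,1}$, expand $\nabla U'(\rho)\circ\varphi$ around the cell centers and use a Taylor/flow-interchange estimate, absorbing the difference between "density on pushed-forward cell $\varphi^X(T_i)$" and "$\bar\mu_N$ on $L_i(t)$" — this is where the $\varepsilon$-regularization must be shown to cost only $O(\varepsilon^{p-1})$, using the convexity/growth hypothesis $U(r)-\inf U\geq \beta r^\alpha$ and Lemma \ref{lem:Abound} to bound the geometry of the optimal Laguerre cells (their diameters, and the discrepancy between $\sum_i m_i^0\delta_{x_i}$ and $\bar\mu_N\ud x$ in $W_2$, which is $O(\sqrt\varepsilon)$ times energy-controlled quantities). (iv) Assemble a Grönwall inequality $\frac{d}{dt}\mc{E}_N(t) \leq C\,\mc{E}_N(t) + C(\varepsilon^{p-1} + \delta_N^2/\varepsilon)$, integrate from $0$ to $t$, and conclude with the lower bound from step (0) relating $\mc{E}_N$ to the relative energy $\int_\Omega U(\bar\mu_N|\rho)$.

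The hard part will be step (iii): controlling the error introduced by the Laguerre/ball regularization and by replacing the exact continuous density by the piecewise-constant reconstruction $\bar\mu_N$ on the optimal cells, and showing these errors are genuinely $O(\varepsilon^{p-1})$ rather than a worse power. This requires quantitative control of the optimal tessellation geometry — uniform bounds on $m_i^0/|L_i(t)|$ from above and below (hence the role of $\rho_{min}$, the $C^{1,1}$ bound on $\rho^0$, and the superlinear growth of $U$ via Lemma \ref{lem:Abound}), together with the estimate that each optimal cell has diameter $O(\sqrt\varepsilon)$ in the regime where the energy is bounded. A secondary technical point is handling the boundary: the Neumann condition $\nabla U'(\rho)\cdot n_{\partial\Omega}=0$ must be used to ensure the flow $\varphi(t)$ preserves $\Omega$ and that the boundary terms in the integration by parts implicit in step (ii) vanish, which is why $\Omega$ is assumed to have Lipschitz boundary and the tessellations live in $\mb{T}_N^s(\Omega)$.
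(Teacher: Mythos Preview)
Your overall strategy --- modulated energy plus Gr\"onwall --- is the paper's, and you correctly identify the key structural ingredients (the gradient formula for $F_\varepsilon$, Lemma~\ref{lem:Abound}, the growth hypothesis on $U$). But the modulated energy you write down has the wrong scaling in the flow-tracking piece, and this breaks the Gr\"onwall step. With your kinetic term $\frac{1}{2\varepsilon}\sum_i\int_{T_i}|\varphi(t,x)-x_i(t)|^2\nu$, its time derivative produces a contribution of the form $\frac{1}{\varepsilon}\langle\varphi-\varphi^X,\,\dot X-\tilde u(X)\rangle$; after Cauchy--Schwarz this leaves a residual $\frac{1}{2\varepsilon}\|\dot X-\tilde u(X)\|_{m^0}^2$, whereas the only dissipation available from $\frac{d}{dt}F_\varepsilon(X)$ and the linearization is the \emph{unscaled} $\|\dot X-\tilde u(X)\|_{m^0}^2$. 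The paper's modulated energy instead uses the unscaled tracking term $\frac{1}{2}\|\varphi_N(t)-X_N(t)\|_{m^0}^2$ with $\varphi_N(t)=(\varphi(t,\Phi^{-1}(x_i^0)))_i$, which vanishes at $t=0$. The factor $\delta_N^2/\varepsilon$ in the final estimate arises not from this term but from the \emph{energetic} part at $t=0$: taking $\{\Phi(T_i)\}$ as a competitor tessellation in \eqref{eq:FepsX} gives $F_\varepsilon(X_N(0))\le C\delta_N^2/\varepsilon+\int_\Omega U(\rho^0)$.

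Your step (iii) also rests on pointwise cell geometry --- uniform two-sided bounds on $m_i^0/|L_i(t)|$ and cell diameters of order $\sqrt\varepsilon$ --- which are not available (nothing rules out thin cells of large diameter, or locally large density). The paper bypasses this entirely. The term carrying the $\varepsilon^{p-1}$ rate is $I_3=\int P'(\tilde\rho)\,\mathrm{div}\,\tilde u\,d(\mu_N-\bar\mu_N)$, which after one Lipschitz bound reduces to $\sum_i\frac{m_i^0}{|L_i|}\int_{L_i}|x_i-x|$; this is split by Young's inequality with exponents $p=\min\{2,\alpha\}$, $q=p/(p-1)$, yielding $\sum_i\int_{L_i}|x_i-x|^2/\varepsilon$ (absorbed into the modulated energy) plus $\varepsilon^{p-1}\int_\Omega\bar\mu_N^p$. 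The last integral is controlled directly by the growth hypothesis $U(r)-\inf U\ge\beta r^\alpha$ together with the energy decrease $\int U(\bar\mu_N)\le F_\varepsilon(X(0))$ --- no cell-by-cell control needed. A secondary gap: for non-convex $\Omega$ the particles $x_i(t)$ need not stay in $\Omega$ (only in $\mathrm{conv}(\Omega)$, Lemma~\ref{lem:hull}), so one must extend $\rho$ and $u=-\nabla U'(\rho)$ to $\mathbb{R}^d$ while preserving the continuity equation; this is Lemma~\ref{lem:continuityextension} and is the reason the estimates involve an auxiliary term $H(t)$ and the distinction $\tilde u\neq\tilde v=-\nabla U'(\tilde\rho)$ outside $\Omega$.
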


The proof is contained in Section \ref{sec:convergence}. Just as in \cite{gallouet2022convergence}, it relies on a Gr\"onwall argument applied on an appropriately constructed modulated energy (as in the classical approach to obtain weak strong stability results for \eqref{eq:pde0}; see, e.g., Chapter 5 in \cite{dafermos2005hyperbolic}).
Finally, note that if for example we set
\begin{equation}\label{eq:x0}
x^0_i = \mathrm{arg}\min_{y\in\mathbb{R}^d} \int_{T_i} |\Phi(x) - y|^2  \nu(x)\ed x\,,
\end{equation}
then $\delta_N$ is just the $L^2_\nu$ projection error of $\Phi$ onto $\mathbb{F}_N$, the space of piece-wise constant vector fields on $\mc{T}_N$. Hence, denoting by $h_N = \max_i \mathrm{diam}(T_i)$ the largest cell diameter, 
we have $\delta_N \leq \| \nabla \Phi\|_\infty (\rho^0[\Omega])^{1/2} h_N$, where $\rho^0[\Omega]$ is the integral of $\rho^0$ over $\Omega$.

\begin{remark} The error estimate we prove is actually stronger than \eqref{eq:est}, and it is given explicitly in equation \eqref{eq:finalestimate} (see also Section \ref{sec:external} for an extension in the presence of external potentials). { In particular, we stress that if $U$ is strongly convex on the interval $[\rho_{min}, \infty)$, which is the case for power laws $U(r) = r^\gamma/(\gamma-1)$ with $\gamma \geq 2$ for example, there exists a constant $\lambda>0$ only depending on $\rho$ such that
\[
\lambda \|\bar{\mu}_N -\rho\|_{L^2}^2 \leq \int_\Omega U(\bar{\mu}_N(t,\cdot)|\rho(t,\cdot))\,,
\]
so that equation \eqref{eq:est} can be read as a more standard approximation result for the discrete solution $\bar{\mu}_N$.}
\end{remark}

\section{Analysis of the discrete model}\label{sec:energy}

In this section, we describe in more detail the discrete model \eqref{eq:gfdiscrete}, in particular we explain the link with Laguerre tessellations and provide an explicit formula for the gradient of the energy with respect to the particle positions. Most of the analysis follows the same lines as in \cite{leclerc2020lagrangian}, or uses standard arguments from semi-discrete optimal transport (see Appendix \ref{sec:otapp} or \cite{merigot2021optimal}, for example).

\subsection{Internal energy} We suppose that the functions $C_i$ defining the energy of the tessellation are given by \eqref{eq:CU}
with $U:[0,\infty) \rightarrow \mathbb{R}$ being a smooth strictly convex function with superlinear growth, satisfying $U(0)=0$. 
From this, it is easy to deduce that $C_i$ is strictly convex, decreasing and \[
C_i(a)\rightarrow m^0_i U'(0^+) \quad \text{ as } \quad a\rightarrow +\infty\,.\] As a consequence, { the Legendre transform of $C_i$, denoted $C^*_i:\mathbb{R}\rightarrow (-\infty,\infty]$ and defined by
\begin{equation}\label{eq:legendre}
C^*_i(w) \coloneqq \sup_{a> 0} \left\{w a - C_i(a)\right\}\,,
\end{equation}
}%
is also strictly convex on its effective domain $\mathrm{dom}(C^*_i)\coloneqq\{ w\in\mathbb{R}~:~ C^*_i(w)<\infty\}$. More precisely, $\mathrm{dom}(C^*_i) \subseteq (-\infty,0]$, $C^*_i$ is an increasing diffeomorphism between $(-\infty, 0)$ and $(-\infty,-m_i^0 U'(0^+))$, and $(C^*_i)'(0^-) = +\infty$ (since $C_i$ is finite and decreasing on $(0,\infty)$). 

The pressure function associated with $U$ is the strictly increasing function $P:[0,\infty) \rightarrow [0,\infty)$, defined by 
\begin{equation}\label{eq:thermodynamic}
P(r) = \left\{ 
\begin{array}{ll}
r U'(r) - U(r) & \text{ if } r>0\,,\\
0 & \text{ if } r=0\,,
\end{array}
\right. 
\end{equation}
and satisfying $P'(r)= rU''(r)$ for all $r>0$. This is related to $C_i'$ and $C^*_i$ by
\begin{equation}\label{eq:cstar}
C'_i(a) = - P\left(\frac{m_i^0}{a}\right) \quad\text{and}\quad (C^*_i)'(w) = \frac{m_i^0}{P^{-1}(-w)}\,,
\end{equation}
for all $a>0$ and $w<0$. { The first equation in \eqref{eq:cstar} can be found by direct computation, whereas the second is a consequence of the latter and of the classical relation $((C^*_i)'\circ C_i')(a) = a$ for all $a>0$.}
 
\subsection{Dual formulation}\label{sec:dualformulation} From now on we suppose that $X=(x_1,\ldots,x_N)\in (\mathbb{R}^d)^N$ is given and that $x_i \neq x_j$ for all $i\neq j$. First we rewrite the energy \eqref{eq:FepsX} as follows:
\begin{equation}\label{eq:infsup}
F_\varepsilon(X) = \inf_{\mc{L} \in\mathbb{L}_N(\Omega),a\in \mathbb{R}^N} \sup_{w\in \mathbb{R}^N} \sum_{i} \int_{L_i}\frac{|x-x_i|^2}{2\varepsilon} \, \ed x  + C_i(a_i) + \frac{w_i}{2\varepsilon}(a_i-|L_i|)\,.
\end{equation}
We obtain the dual problem by swapping the inf and the sup,
\begin{equation}\label{eq:dualproblem}
\begin{aligned}
D_\varepsilon(X) & \coloneqq \sup_{w\in \mathbb{R}^N} \inf_{\mc{L} \in\mathbb{L}_N(\Omega),a\in \mathbb{R}^N} \sum_{i} \int_{L_i}\frac{|x-x_i|^2}{2\varepsilon} \, \ed x  + C_i(a_i) + \frac{w_i}{2\varepsilon}(a_i-|L_i|)\\
& = \sup_{w\in \mathbb{R}^N} \inf_{\mc{L} \in\mathbb{L}_N(\Omega)} \sum_{i} \int_{L_i}\frac{|x-x_i|^2-w_i}{2\varepsilon} \, \ed x  - C_i^*\left( -\frac{w_i}{2\varepsilon}\right) \,.
\end{aligned}
\end{equation}
{Note that the second equality in \eqref{eq:dualproblem} just follows from the definition of the Legendre transform $C_i^*$ in \eqref{eq:legendre} and the fact that we can compute the infimum with respect to each component $a_i$ separately. Note also that since we swapped $\inf$ and $\sup$, at the momement we only know that
\begin{equation}\label{eq:orderdual}
F_\varepsilon(X)\geq D_\varepsilon(X)\,,
\end{equation}
but we will show at the end of this section that in fact the equality holds.
}

Let $\phi(w; \cdot):\Omega \rightarrow \mathbb{R}$ be the function defined by $\phi(w;x) = \mathrm{min}_i |x-x_i|^2-w_i$. If $\mathbb{L}_N(\Omega) = \mathbb{T}_N^s(\Omega)$, then $\mc{L} \in \mathbb{T}_N(\tilde{\Omega})$ for some $\tilde{\Omega} \subseteq \Omega$, and
\begin{equation}\label{eq:optimalitylaguerreb}
\begin{aligned}
\sum_{i} \int_{L_i}\frac{|x-x_i|^2-w_i}{2\varepsilon} \ed x & \geq \frac{1}{2\varepsilon} \int_{\tilde{\Omega}} \phi(w;x) \,\ed x \\& \geq \frac{1}{2\varepsilon} \int_{\Omega} \min(0, \phi(w;x)) \,\ed x \\ &
=\sum_{i} \int_{L^s_i(X,w)} \frac{|x-x_i|^2-w_i}{2\varepsilon} \ed x\,,
\end{aligned}
\end{equation}
where
\begin{equation}\label{eq:Lsxw}
L^s_i(X,w) \coloneqq L_i(X,w) \cap B(x_i, (w_i^+)^{1/2}) \,,
\end{equation}
where $B(x_i,(w_{i}^+)^{1/2})$ is the closed ball of radius $w_i^{1/2}$ if $w_i\geq 0$, and $0$ otherwise. On the other hand if $\mathbb{L}_N(\Omega) = \mathbb{T}_N(\Omega)$, by the same argument we obtain
\begin{equation}\label{eq:optimalitylaguerre}
\sum_{i} \int_{L_i}\frac{|x-x_i|^2-w_i}{2\varepsilon} \geq \sum_{i} \int_{L_i(X,w)} \frac{|x-x_i|^2-w_i}{2\varepsilon} \ed x\,.
\end{equation}
 Therefore 
\begin{equation}\label{eq:dual}
D_\varepsilon(X) = \sup_{w\in \mathbb{R}^N} D_\varepsilon(X;w) \,,\end{equation} where \[ D_\varepsilon(X;w)\coloneqq \sum_{i} \int_{L_i^*(X,w)}\frac{|x-x_i|^2-w_i}{2\varepsilon} \, \ed x  - C_i^*\left( -\frac{w_i}{2\varepsilon}\right) \,.
\]
and  
\[
L_i^*(X,w) = \left\{
\begin{array}{ll}
 L_i(X,w) & \text{ if  } \mathbb{L}_N = \mathbb{T}_N(\Omega)\,,\\  L_i^s(X,w) &\text{ if  } \mathbb{L}_N = \mathbb{T}_N^s(\Omega)\,. \end{array}
 \right.
 \]
A graphical representation of an optimal tessellation in the case $\mathbb{L}_N = \mathbb{T}_N^s(\Omega)$ is given in Figure \ref{fig:power}.

\begin{figure}
\begin{picture}(170,110)(0,0)
\put(0,0){\includegraphics[scale=.7]{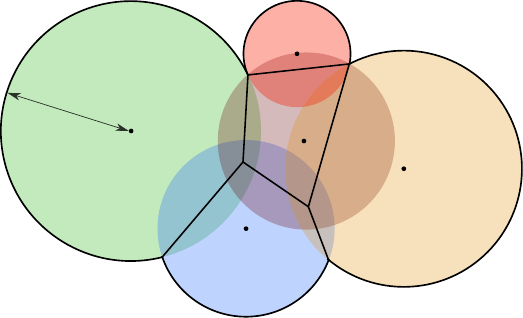}}
\put(18,75){$\sqrt{w_i}$}
\put(49,60){$x_i$}
\end{picture}
\caption{An example of optimal tessellation with cells constructed via equation \eqref{eq:Lsxw}.}\label{fig:power}
\end{figure}
\begin{proposition}\label{prop:optimality} The function $D_\varepsilon(X; \cdot):\mathbb{R}^N \rightarrow (-\infty,\infty]$ is concave and $C^1$ on its effective domain $\mathrm{dom} (D_\varepsilon(X; \cdot)) \subseteq [0,\infty )^N$. In particular for all $w \in (0,\infty)^N$,
\begin{equation}\label{eq:partialder}
\partial_{w_i} D_\varepsilon(X;w) =  \frac{1}{2\varepsilon} (C_i^*)'\left(-\frac{w_i}{2\varepsilon}\right) - \frac{|L_i^*(X,w)|}{2\varepsilon}\, \quad \forall\,i\,.
\end{equation}
Furthermore there exists a unique $w^* \in (0,\infty)^N$ such that $\partial_{w_i} D_\varepsilon(X;w^*) =0 $ for all $i$, or equivalently such that  $|L_i^*(X,w^*)|>0$ and
\begin{equation}\label{eq:opticondw}
P\left(\frac{m^0_i}{|L_i^*(X,w^*)|} \right) = \frac{w_i^*}{2\varepsilon}\, \quad \forall\, i\,,
\end{equation}
and therefore maximising $D_\varepsilon(X;\cdot)$.
\end{proposition}

\begin{proof} The concavity and $C^1$ regularity of $D_\varepsilon(X;\cdot)$ can be proven using standard arguments from the theory of semi-discrete optimal transport \cite{merigot2021optimal}. In particular, consider the function 
\[
Q(w) \coloneqq \sum_i \int_{L_i^*(X,w)}\frac{|x-x_i|^2-w_i}{2\varepsilon} \, \ed x \]
and observe that for any $w,\tilde{w}\in\mathbb{R}^N$
\[
Q(\tilde{w}) \leq \sum_i \int_{L_i^*(X,w)}\frac{|x-x_i|^2-\tilde{w}_i}{2\varepsilon} \, \ed x  = Q(w) + \sum_i \frac{w_i-\tilde{w}_i}{2\varepsilon} |L_i^*(X,w)|\,.
\]
This shows that the super-differential of $Q$ at $w$ is not empty since the vector $(-|L_i^*(X,w)|/(2\varepsilon))_i \in \partial^+Q(w)$, and therefore $Q$ is concave. Furthermore, since $|L_i^*(X,w)|$ is a continuous function of $w$ (see, e.g., Proposition 38 in \cite{merigot2021optimal}), $Q$ (and therefore $D_\varepsilon(X;\cdot)$) is necessarily $C^1$. 
Existence of maximisers can be shown oberving that the function $D_\varepsilon(X;\cdot)$ is coercive since for any $j$,
\[
 \sum_i \int_{L^*_i(X,w_i)} \frac{|x-x_i|^2 - w_i}{2\varepsilon} \ed x \leq \int_\Omega \frac{|x-x_j|^2 - w_j}{2\varepsilon} \ed x \leq \frac{\mathrm{diam}(\Omega)^2}{2\varepsilon}|\Omega| - \frac{|\Omega|}{2\varepsilon} w_j\,.
\]
Since $(C^*_i)'(0^-) =+\infty$ we get that necessarily $w_i>0$, so the optimality conditions hold. Uniqueness of maximizers is a consequence of the strict convexity of the functions $C_i^*$. As a matter of fact, $D_\varepsilon(X;\cdot)$ is strictly concave since it can be expressed as the sum of $Q(\cdot)$, which we proved to be concave, and the function
\[
w\in\mathbb{R}^N \mapsto \sum_i - C^*_i\left( -\frac{w_i}{2\varepsilon} \right)\,,
\]
which is strictly concave on its effective domain.
\end{proof}

From Proposition \ref{prop:optimality}, we can deduce the equivalence with the primal problem \eqref{eq:FepsX}, and the existence and uniquess of  solutions for the latter.{  To see this, recall first that since $\inf \sup \geq \sup \inf$, we always have
$F_\varepsilon(X) \geq D_\varepsilon(X)$ for all $X\in(\mathbb{R}^d)^N$.  
Then, denoting by $w^*$ the unique solution of $\partial_{w_i} D_\varepsilon(X;w^*) =0 $ for all $i$,  by equation \eqref{eq:partialder} we have
\[
(C_i^*)'\left(-\frac{w_i^*}{2\varepsilon}\right) =  |L_i^*(X,w^*)|\,
\]
and therefore, by the Fenchel-Young inequality,
\[
C_i^*\left(-\frac{w_i^*}{2\varepsilon}\right) + C_i(|L_i^*(X,w^*)|) = -\frac{w_i^*}{2\varepsilon} |L_i^*(X,w^*)|\,.
\]
This implies that}%
\[
\begin{aligned}
D_\varepsilon(X) = D_\varepsilon(X;w^*) & = \sum_i \int_{L_i^*(X,w^*)}\frac{|x-x_i|^2}{2\varepsilon} \, \ed x - \frac{w_i^*}{2\varepsilon} |L_i^*(X,w^*)|  - C_i^*\left( -\frac{w_i^*}{2\varepsilon}\right) \\
& = \sum_i \int_{L_i^*(X,w^*)}\frac{|x-x_i|^2}{2\varepsilon} \, \ed x +C_i(|L_i^*(X,w^*)|) \geq  F_\varepsilon(X),
\end{aligned}
\]
{and so $D_\varepsilon(X)=F_\varepsilon(X)$.} Since $w^*$ is the unique maximiser of the dual problem \eqref{eq:dual}, this also shows that the tessellation $\mc{L}^*(X,w^*) \coloneqq\{ L_i^*(X,w^*)\}_i$ is the unique  solution of problem \eqref{eq:FepsX}. { Indeed, supposing that $ \bar{\mc{L}} \coloneqq \{ \bar{L_i}\}_i$ is a minimizer, then 
\begin{equation}\label{eq:uniquenessbound}
\begin{aligned}
F_\varepsilon(X) &= \sum_{i} \int_{\bar{L_i}}\frac{|x-x_i|^2}{2\varepsilon} \, \ed x  + \sum_i C_i(|\bar{L_i}|)\\
& \geq 
\sum_{i} \int_{\bar{L_i}}\frac{|x-x_i|^2 - w_i^*}{2\varepsilon} \, \ed x  - \sum_i C_i^*\left(-\frac{w_i^*}{2\varepsilon}\right)\,,
\end{aligned}
\end{equation}
where we used the Fenchel-Young inequality 
\[
C_i^*\left(-\frac{w_i^*}{2\varepsilon}\right) + C_i(|\bar{L_i}|) \geq -\frac{w_i^*}{2\varepsilon} |\bar{L_i}|\,.
\]
By the same arguments as for \eqref{eq:optimalitylaguerreb} and \eqref{eq:optimalitylaguerre}, one can see that the right-hand side in equation \eqref{eq:uniquenessbound} is strictly larger than $D_\varepsilon(w^*)$ unless $\bar{L_i} =L_i^*(X,w^*)
$ for all $i$, in which case the equality holds. 
}

\begin{remark}\label{rem:ot} Note that setting $C_i(a) = \iota_{\{m_i^0\}}(a)$, the convex indicator function of the set $\{m_i^0\}$, problem \eqref{eq:FepsX} with $\mathbb{L}_N(\Omega) = \mathbb{T}_N(\Omega)$ coincides with a classical optimal transport problem between $\mathbf{1}_\Omega \ed x$ and $\sum_i m_i^0 \delta_{x_i}$. In this case, $C_i^*(w)= m_i^0 w$, and the duality $F_\varepsilon(X) = D_\varepsilon(X)$ reduces to the classical duality theorem from semi-discrete optimal transport \cite{merigot2021optimal}.
\end{remark}

In the following we will need the optimality conditions in Proposition \ref{prop:optimality} in the following alternative form:

\begin{lemma} \label{lem:optimalityu}
Let $\mc{L}=\{L_i\}_i$ solve problem \eqref{eq:FepsX}, with $\mathbb{L}_N(\Omega)$ equal to either $\mathbb{T}_N(\Omega)$ or $\mathbb{T}_N^s(\Omega)$. Then for any smooth vector field $u:\Omega \rightarrow \mathbb{R}^d$ with $u\cdot n_{\partial \Omega} = 0$ on $\partial \Omega$,
\[
\sum_i \int_{L_i} \frac{x-x_i}{\varepsilon} \cdot u(x) \, \ed x = \sum_i \int_{L_i}\left[P\left(\frac{m_i^0}{|L_i|}\right)-\frac{|x-x_i|^2}{2\varepsilon} \right] \mathrm{div}u(x)\, \ed x\,.
\]
\end{lemma}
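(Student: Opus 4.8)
The plan is to read the identity as the first-order (inner-variation) optimality condition for $\mc{L}$ as a minimizer of \eqref{eq:FepsX} at fixed $X$: I would perturb the optimal cells by the flow of $u$ and differentiate the energy at time zero. Let $(\psi_t)_t$ be the flow of $u$, i.e.\ the solution of $\partial_t\psi_t=u\circ\psi_t$ with $\psi_0=\mathrm{id}$. Since $u$ is smooth and tangent to $\partial\Omega$ (because $u\cdot n_{\partial\Omega}=0$), this flow is globally defined and each $\psi_t$ is a diffeomorphism of $\Omega$ onto itself. Hence $\{\psi_t(L_i)\}_i$ is again an admissible competitor in \eqref{eq:FepsX} for every $t$: if $\mathbb{L}_N(\Omega)=\mathbb{T}_N(\Omega)$ then $\bigcup_i\psi_t(L_i)=\psi_t(\Omega)=\Omega$, whereas if $\mathbb{L}_N(\Omega)=\mathbb{T}_N^s(\Omega)$ then $\bigcup_i\psi_t(L_i)=\psi_t(\bigcup_iL_i)\subseteq\Omega$; in both cases the $\psi_t(L_i)$ have pairwise disjoint interiors (and in any event only their volumes and the measure-zero overlaps enter the energy).

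Consequently the function
\[
g(t)\coloneqq\sum_i\int_{\psi_t(L_i)}\frac{|x-x_i|^2}{2\varepsilon}\,\ed x+C_i(|\psi_t(L_i)|)
\]
has a global minimum at $t=0$, so $g'(0)=0$ provided $g$ is differentiable there. I would compute $g'(0)$ after the change of variables $x=\psi_t(y)$: writing $J_t(y)\coloneqq\det(\nabla\psi_t(y))$ one gets $g(t)=\sum_i\int_{L_i}\frac{|\psi_t(y)-x_i|^2}{2\varepsilon}J_t(y)\,\ed y+C_i\!\big(\int_{L_i}J_t(y)\,\ed y\big)$, an integral over the \emph{fixed} bounded sets $L_i$. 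Differentiability at $t=0$ then follows by dominated convergence from the smoothness of $(t,y)\mapsto(\psi_t(y),J_t(y))$ and of $C_i$ near $|L_i|$; here one uses that $0<|L_i|<\infty$, which is guaranteed by the optimality condition \eqref{eq:opticondw} together with the strict monotonicity and superlinear growth of $P$. Using $\partial_t\psi_t|_{t=0}=u$ and the Liouville/Jacobi formula $\partial_tJ_t|_{t=0}=\mathrm{div}\,u$ gives
\[
g'(0)=\sum_i\int_{L_i}\!\Big[\frac{(x-x_i)\cdot u(x)}{\varepsilon}+\frac{|x-x_i|^2}{2\varepsilon}\,\mathrm{div}\,u(x)\Big]\ed x+C_i'(|L_i|)\int_{L_i}\mathrm{div}\,u(x)\,\ed x.
\]

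Setting $g'(0)=0$, substituting $C_i'(|L_i|)=-P(m_i^0/|L_i|)$ from \eqref{eq:cstar}, and rearranging then gives exactly the claimed identity. I expect the main (mild) obstacles to be purely technical: (i) checking that the perturbed family stays admissible for all small $t$ of both signs — this is precisely what tangency $u\cdot n_{\partial\Omega}=0$ provides, since then $\psi_t$ maps $\Omega$ onto $\Omega$; and (ii) justifying differentiation of $g$ under the integral sign at $t=0$, which becomes routine once the cells are fixed by the change of variables and one knows $|L_i|\in(0,\infty)$.
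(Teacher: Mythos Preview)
Your proof is correct, but it takes a genuinely different route from the paper's. The paper does not use an inner-variation argument at all: instead it exploits the explicit Laguerre structure of the optimal tessellation via the Lipschitz function $\phi(w;x)=\min_i(|x-x_i|^2-w_i)$ (or $\min\{\phi(w;\cdot),0\}$ in the $\mathbb{T}_N^s(\Omega)$ case). Since $\nabla_x\phi(w;x)=2(x-x_i)$ a.e.\ on $L_i$, one has $\sum_i\int_{L_i}\frac{x-x_i}{\varepsilon}\cdot u=\frac{1}{2\varepsilon}\int_\Omega\nabla_x\phi\cdot u$, and a single global integration by parts (using $u\cdot n_{\partial\Omega}=0$) gives $\sum_i\int_{L_i}\big[\tfrac{w_i}{2\varepsilon}-\tfrac{|x-x_i|^2}{2\varepsilon}\big]\mathrm{div}\,u$; the optimality condition \eqref{eq:opticondw} then replaces $w_i/(2\varepsilon)$ by $P(m_i^0/|L_i|)$.

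The trade-off is this: the paper's argument is shorter and sidesteps any regularity discussion (no flow, no dominated convergence, no checking admissibility of competitors), since $\phi$ is globally Lipschitz and the integration by parts is over the whole of $\Omega$ at once, with cell-interface terms cancelling automatically. Your approach, by contrast, makes the variational origin of the identity transparent --- it is literally the first-order optimality of $\mc{L}$ in the primal problem --- and in principle applies to any minimizer without knowing its explicit (Laguerre) form, at the price of the mild technical checks you flag.
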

\begin{proof} 
Consider again the function $\phi(w;\cdot):\Omega \rightarrow \mathbb{R}$ defined by $\phi(w;x) = \mathrm{min}_i |x-x_i|^2-w_i$. If $\mathbb{L}_N(\Omega) = \mathbb{T}_N(\Omega)$ we have
\[
\begin{aligned}
\sum_i \int_{L_i} \frac{x-x_i}{\varepsilon} \cdot u(x) \, \ed x & = \int_{\Omega }  \frac{1}{2\varepsilon} \nabla_x \phi(w^*;x) \cdot u(x) \, \ed x \\
& =\sum_i \int_{L_i}\left[ \frac{w_i^*}{2\varepsilon}-\frac{|x-x_i|^2}{2\varepsilon} \right] \mathrm{div}u(x)\, \ed x\,,
\end{aligned}
\]%
with $w^*$ being the unique maximizer of $D_\varepsilon(X;\cdot)$, and we conclude using Proposition \ref{prop:optimality}. If $\mathbb{L}_N(\Omega) = \mathbb{T}_N^s(\Omega)$ we just need to replace $\phi(w;\cdot)$ with $\min\{\phi(w;\cdot),0\}$.
\end{proof}

\subsection{Discrete dynamical system} Let us introduce the set of particle configurations where at least two particles share the same location:
\[\Delta_N = \{ X=(x_1,\ldots,x_N) \in (\mathbb{R}^d)^N \,: \exists \,i, j  ~\text{such that}~ x_i =x_j\,,~ i \neq j\}\,.\]

\begin{proposition}\label{prop:gradient} The function $X \mapsto F_\varepsilon(X)$ is $C^1$ on $(\mathbb{R}^d)^N \setminus \Delta_N$. Moreover, for any $X\in (\mathbb{R}^d)^N \setminus \Delta_N$,
\[
(\nabla_{m^0} F_\varepsilon(X))_i = \frac{|L_i|}{m_i^0} \frac{x_i - b_i}{\varepsilon}\,, \quad \text{where} \quad b_i \coloneqq \frac{1}{|L_i|} \int_{L_i} x\, \ed x\,,
\]
and where $\mc{L} = \{L_i\}_i$ is the unique minimiser of problem \eqref{eq:FepsX}.
\end{proposition}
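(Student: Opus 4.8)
The plan is to differentiate the dual value $D_\varepsilon(X) = D_\varepsilon(X;w^*(X))$ with respect to $X$, exploiting the fact that $w^*$ is a critical point of $D_\varepsilon(X;\cdot)$ so that the dependence of $w^*$ on $X$ does not contribute (envelope theorem). First I would recall from Proposition \ref{prop:optimality} and the discussion following it that $F_\varepsilon(X) = D_\varepsilon(X) = D_\varepsilon(X; w^*(X))$, where $w^* = w^*(X) \in (0,\infty)^N$ is the unique maximizer, characterized by \eqref{eq:opticondw}. I would then note that $w^*$ depends continuously on $X$ (by uniqueness and a standard compactness/coercivity argument, or by the implicit function theorem once one knows enough regularity), which together with the $C^1$ regularity of $D_\varepsilon(X;\cdot)$ and joint smoothness of $(X,w)\mapsto D_\varepsilon(X;w)$ on the region where the $x_i$ are distinct and $w$ positive, will give the $C^1$ regularity of $F_\varepsilon$ on $(\mathbb R^d)^N\setminus \Delta_N$.

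The core computation is the gradient formula. Writing $D_\varepsilon(X;w) = \sum_i \int_{L_i^*(X,w)} \frac{|x-x_i|^2 - w_i}{2\varepsilon}\,\ed x - C_i^*(-w_i/(2\varepsilon))$, I would differentiate with respect to $x_i$. There are two sources of dependence on $x_i$: the integrand $|x-x_i|^2$, and the domain $L_i^*(X,w)$ (and the neighboring domains $L_j^*$ which share boundary with it). The key observation, exactly as in the concavity argument in the proof of Proposition \ref{prop:optimality}, is that at fixed $w$ the map $X \mapsto Q(X,w) \coloneqq \sum_i \int_{L_i^*(X,w)} \frac{|x-x_i|^2-w_i}{2\varepsilon}\,\ed x$ equals $\frac{1}{2\varepsilon}\int_{\Omega'}\phi(w;x)\,\ed x$ (with $\Omega' = \Omega$ or the region where $\phi(w;\cdot)\le 0$, depending on the choice of $\mathbb L_N(\Omega)$), where $\phi(w;x) = \min_i(|x-x_i|^2 - w_i)$; since this is a pointwise minimum, the boundary terms from the moving domains cancel (the integrand is continuous across the cell interfaces), so $\partial_{x_i} Q(X,w) = \int_{L_i^*(X,w)} \partial_{x_i}\frac{|x-x_i|^2}{2\varepsilon}\,\ed x = \frac{1}{\varepsilon}\int_{L_i^*}(x_i - x)\,\ed x = \frac{|L_i^*|}{\varepsilon}(x_i - b_i)$. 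Combined with the envelope theorem (the $w$-derivative of $D_\varepsilon(X;w^*)$ vanishes at $w^*$ by optimality, and the $C_i^*$ terms do not depend on $X$), this gives $\partial_{x_i} F_\varepsilon(X) = \frac{|L_i|}{\varepsilon}(x_i - b_i)$, where $L_i = L_i^*(X,w^*)$. Converting from the Euclidean gradient on $(\mathbb R^d)^N$ to the $m^0$-weighted gradient divides the $i$-th block by $m_i^0$, yielding $(\nabla_{m^0}F_\varepsilon(X))_i = \frac{|L_i|}{m_i^0}\frac{x_i-b_i}{\varepsilon}$.

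The main obstacle is making the envelope argument and the domain-differentiation rigorous rather than formal. For the envelope part, one cannot just invoke a black-box theorem unless $w^*(X)$ is known to be $C^1$ in $X$; the cleanest route is probably to prove directly, using concavity of $D_\varepsilon(X;\cdot)$ and the optimality $\partial_w D_\varepsilon(X;w^*) = 0$, that $F_\varepsilon(X+h) - F_\varepsilon(X) = D_\varepsilon(X+h; w^*(X)) - D_\varepsilon(X;w^*(X)) + o(|h|)$ — the upper bound is immediate since $w^*(X)$ is suboptimal for $X+h$, and a matching lower bound follows from using $w^*(X+h)$ as a competitor for $X$ together with continuity of $w^*$ — so that the perturbation reduces to differentiating $X\mapsto D_\varepsilon(X;w)$ at fixed $w$. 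For the domain-differentiation part, one should justify interchanging $\partial_{x_i}$ with the integral over the $x_i$-dependent set $L_i^*(X,w)$; here the rewriting of $Q$ as an integral of the pointwise minimum $\phi(w;\cdot)$ over a \emph{fixed} domain is what sidesteps the issue entirely — $\phi(w;\cdot)$ is Lipschitz in $x$ and differentiable in $x_i$ for a.e.\ $x$ (away from the measure-zero interfaces), with dominated gradient, so differentiation under the integral sign is legitimate and produces precisely the stated formula. I would present the $\mathbb T_N(\Omega)$ case in full and remark that the $\mathbb T_N^s(\Omega)$ case is identical after replacing $\phi$ by $\min\{\phi,0\}$, exactly as in the proof of Lemma \ref{lem:optimalityu}.
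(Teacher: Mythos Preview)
Your approach is correct and takes a genuinely different route from the paper. The paper works from the \emph{primal} side: it uses the optimal tessellation $\mc{L}(X)$ as a competitor in $F_\varepsilon(Y)$ to show directly that the claimed vector $G_\varepsilon(X)$ lies in the Fr\'echet superdifferential $\partial^+ F_\varepsilon(X)$, and then establishes continuity of $X\mapsto G_\varepsilon(X)$ by passing through the optimal-transport reformulation \eqref{eq:FepsXw2} and invoking stability of optimal transport plans under narrow convergence of the marginals. You instead work from the \emph{dual} side: you differentiate $X\mapsto D_\varepsilon(X;w)$ at fixed $w$ --- the rewriting of $Q$ as an integral of the pointwise minimum $\phi$ (or $\min\{\phi,0\}$) over a fixed domain is exactly what makes this clean and legitimate --- and then use an envelope argument to eliminate the contribution from $w^*(X)$. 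Your route avoids the optimal-transport machinery entirely and stays within the dual framework already set up in Section~\ref{sec:dualformulation}; the cost is that you must supply continuity of $w^*(X)$ and joint continuity of $\partial_X D_\varepsilon(X;w)$ in $(X,w)$, both of which reduce to the continuity of Laguerre-cell volumes and barycenters (cited in the paper from \cite{merigot2021optimal}). Either argument ultimately needs a continuity statement about the optimal tessellation; the paper packages it via transport-plan stability, you via the dual variable $w^*$.
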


The proof is a slight adaptation of the arguments used in \cite{leclerc2020lagrangian} to prove an analogous result, and is therefore postponed to Section \ref{sec:proofgradient} in the appendix.

In view of Proposition \ref{prop:gradient}, the particle dynamics is governed by the system of ODEs
\begin{equation}\label{eq:ode}
\dot{x}_i(t) = -\frac{|L_i(t)|}{m_i^0} \frac{x_i(t) -b_i(t)}{\varepsilon}\,,
\end{equation}
where we denote by $L_i(t)$ the optimal tessellation associated with the particle configuration $X(t) = (x_1(t),\ldots,x_N(t)) \in (\mathbb{R}^d)^N$. Note that this can be interpreted as a generalized continuous-time version of Lloyd's algorithm for optimal quantization. Specifically, the classical Lloyd's algorithm \cite{lloyd1982least} can be formally recovered as a forward Euler discretization of \eqref{eq:ode} in the case where $C_i= 0$ (since in this case the tessellation $(L_i(t))_i$ is Voronoi), whereas setting $C_i = \iota_{\{m_i^0\}}$ as in Remark \ref{rem:ot} on recovers in the same way a variant of Lloyd's algorithm for the uniform quantization problem, studied in \cite{merigot2021non}.

By Proposition \ref{prop:gradient} the right-hand side of \eqref{eq:ode} is a continuous function of $X(t)$ on the open set $(\mathbb{R}^d)^N \setminus \Delta_N$, and therefore the system always admits solutions if $X(0)\in (\mathbb{R}^d)^N \setminus \Delta_N$. We now show that such solutions are always defined for all times (again, adapting similar arguments from \cite{leclerc2020lagrangian}). For this, we will need the following lemma:

\begin{lemma}\label{lem:lip} Let $\mc{L}(X,w)$ the Laguerre tessellation of $\Omega$ associated with the position vector $X\in (\mathbb{R}^d)^N$ and the vector of weights $w$. If $|L_i(X,w)|>0$ for all $i=1,\ldots,N$, then for all $i,j =1,\ldots,N$,
\[
|w_i - w_j| \leq 2\mathrm{diam}(\Omega)|x_i -x_j|\,.
\]
\end{lemma}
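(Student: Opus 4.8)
The plan is to exploit the fact that since every cell has positive volume, each Laguerre cell $L_i(X,w)$ must contain a point, and indeed an entire small neighborhood, of $\Omega$, and then to compare the defining inequalities of $L_i(X,w)$ and $L_j(X,w)$ at a suitable point. Concretely, since $|L_j(X,w)|>0$ there exists a point $y \in \mathrm{int}(L_j(X,w)) \cap \Omega$; by definition of the Laguerre cell this means $|y - x_j|^2 - w_j \le |y - x_i|^2 - w_i$ for all $i$. Symmetrically, since $|L_i(X,w)|>0$ there exists $z \in L_i(X,w)\cap\Omega$ with $|z-x_i|^2 - w_i \le |z-x_j|^2 - w_j$. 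Subtracting gives two-sided control: $w_i - w_j \le |y-x_i|^2 - |y-x_j|^2$ and $w_j - w_i \le |z-x_j|^2 - |z-x_i|^2$.

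The next step is to estimate a difference of the form $|y-x_i|^2 - |y-x_j|^2$ for $y\in\Omega$. Factoring as a difference of squares, $|y-x_i|^2 - |y-x_j|^2 = \langle (x_j - x_i), (2y - x_i - x_j)\rangle = \langle x_j - x_i, (y-x_i) + (y-x_j)\rangle$. Hence by Cauchy--Schwarz, $\bigl||y-x_i|^2 - |y-x_j|^2\bigr| \le |x_i - x_j|\,\bigl(|y-x_i| + |y-x_j|\bigr)$. To turn this into the claimed bound I need $|y-x_i|+|y-x_j| \le 2\,\mathrm{diam}(\Omega)$; this is where the hypothesis $y\in\Omega$ matters, but it also requires $x_i,x_j$ to be close to $\Omega$. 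Since $y\in L_j(X,w)\cap\Omega$ already lies in $\Omega$, and a cell of positive measure forces its center to be ``near'' $\Omega$, one can argue: pick also $y'\in L_j(X,w)\cap\Omega$ and use the Laguerre inequality at $y$ against index $i$ together with optimality to locate $x_j$; more cleanly, note that we can choose the witness points $y,z$ both inside $\Omega$, and then it suffices to observe that for any point $x_i$ which is the center of a positive-measure Laguerre cell meeting $\Omega$, the quantity $|y - x_i|$ for $y\in L_i(X,w)\cap\Omega$ is controlled — but $L_i(X,w)\subseteq\Omega$ by definition of the tessellation of $\Omega$, so in fact $x_i$ need not lie in $\Omega$.

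The cleanest route, and the one I would carry out, is therefore to keep the witness points $y$ (for cell $j$) and $z$ (for cell $i$) both in $\Omega\subseteq\Omega$ and to write $|y-x_i|^2 - |y-x_j|^2$ and $|z-x_j|^2-|z-x_i|^2$ using the difference-of-squares identity above, obtaining $w_i - w_j \le |x_i-x_j|\,(|y-x_i|+|y-x_j|)$ and $w_j - w_i \le |x_i - x_j|\,(|z-x_i|+|z-x_j|)$. The remaining task is to bound $|y-x_i|+|y-x_j|$: choosing $y\in L_j(X,w)$ means $|y-x_j|\le|y-x_i|$ is false in general, so instead I use that $y\in\Omega$ and, crucially, I can also take $z$ with $|z-x_i|\le|z-x_k|$ for all $k$ including $k=j$, hence $|z-x_i|\le |z-x_j|$. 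Then $|z-x_i|+|z-x_j|\le 2|z-x_j|$ and I need $|z-x_j|\le\mathrm{diam}(\Omega)$, which holds since both $z\in L_i(X,w)\subseteq\Omega$ and I may pick the witness for cell $j$'s center to conclude $x_j$ is within $\mathrm{diam}(\Omega)$ of $\Omega$; symmetrically for the other inequality. Assembling the two one-sided bounds yields $|w_i - w_j|\le 2\,\mathrm{diam}(\Omega)\,|x_i - x_j|$.

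The main obstacle is the bookkeeping around the cell centers $x_i$ possibly lying outside $\Omega$: one must use the positive-volume hypothesis carefully to guarantee that the relevant distances from witness points in $\Omega$ to the centers are bounded by $\mathrm{diam}(\Omega)$. I expect this to be handled by noting that if $|L_i(X,w)|>0$ then $L_i(X,w)$ contains a point $z\in\Omega$ satisfying $|z-x_i|^2 - w_i \le |z-x_k|^2 - w_k$ for all $k$, and combining the two such inequalities (for $i$ and for $j$) is exactly what produces the symmetric estimate without ever needing to bound $|x_i|$ itself — every term that appears is of the form (distance from a point of $\Omega$ to a center) paired against $|x_i - x_j|$, and the chosen minimizing inequalities let us replace the larger such distance by the smaller one, which is then $\le\mathrm{diam}(\Omega)$ because it is realized inside $\Omega$ against the nearer center.
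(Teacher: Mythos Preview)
Your approach is essentially the paper's: pick a witness point in a positive-measure cell, use the defining Laguerre inequality, and rewrite the resulting difference of squares as an inner product. The paper does this in three lines: from $x\in L_i(X,w)$ one gets $w_j - w_i \le |x-x_j|^2 - |x-x_i|^2 = (x_j + x_i - 2x)\cdot(x_j - x_i) \le 2\,\mathrm{diam}(\Omega)\,|x_i - x_j|$, and then swaps $i$ and $j$.

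Where your argument breaks down is the attempted resolution of the ``centers outside $\Omega$'' issue. You claim you can take $z\in L_i(X,w)$ with $|z - x_i|\le|z - x_k|$ for all $k$; but this is the \emph{Voronoi} inequality, not the Laguerre one. Membership $z\in L_i(X,w)$ only gives $|z-x_i|^2 - w_i \le |z-x_k|^2 - w_k$, which does \emph{not} imply $|z-x_i|\le|z-x_j|$ (indeed you yourself note a few lines earlier that ``$|y-x_j|\le|y-x_i|$ is false in general''). So the step $|z-x_i|+|z-x_j|\le 2|z-x_j|$ is unjustified, and the subsequent claim $|z-x_j|\le\mathrm{diam}(\Omega)$ is likewise not established. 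The paper sidesteps all of this by bounding $|x_j + x_i - 2x|\le |x_i-x|+|x_j-x|\le 2\,\mathrm{diam}(\Omega)$ directly; this tacitly uses $x,x_i,x_j\in\Omega$ (or its convex hull), which is exactly the situation in the paper's applications and is the hypothesis you were circling around but never actually invoked.
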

\begin{proof}
Since $|L_i(X,w)|>0$, there exists $x\in \Omega$ such that 
\[
|x-x_i|^2 -w_i \leq |x-x_j|^2-w_j\,,
\]
for all $j$. Rearranging terms we obtain,
\[
w_j -w_i \leq |x_j|^2 -|x_i|^2 - 2 x\cdot (x_j -x_i) = (x_j+x_i -2x) \cdot (x_j -x_i) \leq 2 \mathrm{diam}(\Omega) |x_j-x_i|\,.
\]
Swapping the role of $i$ and $j$ we get the result.
\end{proof}

Suppose that $X(t)$ solves \eqref{eq:ode} on some interval $[0,t^*)$ with $X(0)\in \Omega^N\setminus \Delta_N$, and consider the stricly decreasing function $R(s)\coloneqq P(1/s)$ for all $s>0$. Then,  by the mean value theorem,
\[
\left|P\left(\frac{m^0_i}{|L_i(t)|}\right)- P\left(\frac{m^0_j}{|L_j(t)|}\right)\right| \geq C_N \left|\frac{|L_i(t)|}{m^0_i} - \frac{|L_j(t)|}{m^0_j}\right|\,,
\]
where $C_N\coloneqq |R'(|\Omega|/\bar{m}^0)|>0$, with $\bar{m}^0 \coloneqq \min_i m^0_i$, and by the optimality conditions in Proposition \ref{prop:optimality} and then Lemma \ref{lem:lip},
\begin{equation}\label{eq:Rlipbound}
\left|\frac{|L_i(t)|}{m^0_i} - \frac{|L_j(t)|}{m^0_j}\right| \leq \frac{|w_i(t)- w_j(t)|}{2\varepsilon  C_N}  \leq  \frac{\mathrm{diam}(\Omega)}{\varepsilon  C_N} |x_i(t)- x_j(t)|\,.
\end{equation}

From these bounds we can deduce a lower bound on the distance between particles. In particular, first observe that integrating
\[
|x-x_i|^2-w_i \leq |x-x_j|^2-w_j
\]
over a non-empty Laguerre cell $L_i(X,w)$ one obtains $2\langle b_i, x_i -x_j\rangle \geq w_j-w_i$ and swapping the role of $i$ and $j$ we deduce that
\begin{equation}\label{eq:monobary} \langle x_i - x_j, b_i-b_j\rangle\geq 0\,.\end{equation}%
Using this fact, and omitting the time dependency of $x_i$, $b_i$ and $L_i$ to simplify the notation, we obtain
\begin{equation}\label{eq:dtxixj}
\begin{aligned}
\frac{\ed}{\ed t} \frac{|x_i - x_j|^2}{2} & =  \left\langle x_i - x_j , -\frac{ x_i - b_i}{\varepsilon}\frac{|L_i|}{m_i^0} + \frac{ x_j - b_j}{\varepsilon}\frac{|L_j|}{m_j^0}\right\rangle \\
&\geq  - \frac{|L_i|}{m_i^0} \frac{|x_i -x_j|^2}{\varepsilon}  - |x_j -b_j| |x_i-x_j| \left| \frac{|L_i|}{m_i^0} -\frac{|L_j|}{m_j^0}   \right| \frac{1}{\varepsilon} \\
&\geq  - \frac{|\Omega|}{\bar{m}^0} \frac{|x_i -x_j|^2}{\varepsilon}  -\frac{\mathrm{diam}(\Omega)^2}{\varepsilon^2  C_N} |x_i- x_j|^2 \,,
\end{aligned}
\end{equation}
 where we added and subtracted $\langle x_i-x_j, x_j-b_j\rangle|L_i|/(\varepsilon m_i^0)$ and then used $\eqref{eq:monobary}$ to pass from the first to the second line, and used \eqref{eq:Rlipbound} to pass from the second to the third line. By a Gr\"onwall inequality, this shows the long time existence of discrete solutions:

\begin{lemma}\label{lem:hull} If $X(0) \in (\mathbb{R}^d)^N\setminus \Delta_N$, then the solutions to \eqref{eq:ode} are defined for all times $t>0$. Moreover, if $X(0) \in \mathrm{conv}(\Omega)^N\setminus \Delta_N$, then $X(t) \in \mathrm{conv}(\Omega)^N\setminus \Delta_N$ for all $t>0$, where $\mathrm{conv}(\Omega)$ denotes the convex hull of $\Omega$.
\end{lemma}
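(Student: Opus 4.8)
The plan is to read off both assertions from the differential inequality \eqref{eq:dtxixj} just obtained, an elementary a priori bound on the particle positions, and the standard continuation principle for ODEs with merely continuous right-hand side: recall that the right-hand side of \eqref{eq:ode} is continuous on $(\mathbb{R}^d)^N\setminus\Delta_N$ by Proposition \ref{prop:gradient}, so a solution with $X(0)\in(\mathbb{R}^d)^N\setminus\Delta_N$ exists on a maximal interval $[0,t^*)$, and if $t^*<\infty$ it cannot remain inside any compact subset of $(\mathbb{R}^d)^N\setminus\Delta_N$.

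First I would rule out collisions. Inequality \eqref{eq:dtxixj} says that for every $i\neq j$ one has $\tfrac{\ed}{\ed t}|x_i-x_j|^2\geq-\lambda|x_i-x_j|^2$ on $[0,t^*)$, with $\lambda>0$ depending only on $\varepsilon$, $|\Omega|$, $\mathrm{diam}(\Omega)$, $\bar{m}^0$ and $C_N$; by Grönwall this gives
\[
|x_i(t)-x_j(t)|^2\geq|x_i(0)-x_j(0)|^2\,e^{-\lambda t}\qquad\text{for all }t\in[0,t^*),\ i\neq j,
\]
which is strictly positive since $X(0)\notin\Delta_N$. Hence $X(t)\notin\Delta_N$ throughout, and on a finite interval the interparticle distances stay bounded away from $0$. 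Next I would bound $|x_i(t)|$: each centroid $b_i(t)$ is an average of points of $L_i(t)\subseteq\Omega$, hence lies in the compact convex set $K\coloneqq\mathrm{conv}(\Omega)$, so with $M\coloneqq\sup_{y\in K}|y|$ and using $|L_i|\geq 0$,
\[
\frac{\ed}{\ed t}\frac{|x_i|^2}{2}=-\frac{|L_i|}{m_i^0\varepsilon}\big(|x_i|^2-\langle x_i,b_i\rangle\big)\leq-\frac{|L_i|}{m_i^0\varepsilon}\,|x_i|\,(|x_i|-M),
\]
which is $\leq 0$ whenever $|x_i(t)|>M$; a barrier argument then gives $|x_i(t)|\leq\max\{|x_i(0)|,M\}$ on $[0,t^*)$. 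Combining the two bounds, if $t^*<\infty$ the trajectory would stay in a fixed compact subset of $(\mathbb{R}^d)^N\setminus\Delta_N$, contradicting the continuation principle; hence $t^*=\infty$.

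For the invariance of $\mathrm{conv}(\Omega)^N$, assuming now $X(0)\in K^N\setminus\Delta_N$, I would monitor $g(x_i(t))$ with $g(x)\coloneqq\tfrac12\mathrm{dist}(x,K)^2$, which is $C^1$ with $\nabla g(x)=x-\pi_K(x)$, where $\pi_K$ is the projection onto the closed convex set $K$. Splitting $x_i-b_i=(x_i-\pi_K(x_i))+(\pi_K(x_i)-b_i)$ and invoking the projection inequality $\langle x_i-\pi_K(x_i),b_i-\pi_K(x_i)\rangle\leq 0$ (valid because $b_i\in K$) yields
\[
\frac{\ed}{\ed t}\,g(x_i(t))=-\frac{|L_i|}{m_i^0\varepsilon}\,\langle x_i-\pi_K(x_i),\,x_i-b_i\rangle\leq-\frac{|L_i|}{m_i^0\varepsilon}\,|x_i-\pi_K(x_i)|^2\leq 0,
\]
so, since $g\geq 0$ and $g(x_i(0))=0$, we obtain $g(x_i(t))=0$, i.e.\ $x_i(t)\in\mathrm{conv}(\Omega)$, for all $t>0$ and all $i$.

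I do not expect a genuine obstacle here. The one point needing care is that the optimal cells $L_i$ — and therefore the velocity field in \eqref{eq:ode} — depend on $X$ through a map whose regularity has not been analysed, and some cells may even have zero area (momentarily freezing the corresponding particle). The argument must therefore rely only on the structural facts already available, namely the uniform bound $|L_i|\leq|\Omega|$, the membership $b_i\in\mathrm{conv}(\Omega)$, and Lemma \ref{lem:lip} (which already underlies \eqref{eq:dtxixj}), rather than on any Lipschitz control of $X\mapsto L_i(X)$; everything else is Grönwall together with the elementary convexity inequalities used above.
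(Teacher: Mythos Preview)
Your argument is correct and follows essentially the same route as the paper's: Gr\"onwall on \eqref{eq:dtxixj} to rule out collisions, and the differential inequality for the squared distance to $K=\mathrm{conv}(\Omega)$ (via the projection inequality and $b_i\in K$) to control positions. The paper's proof is more compressed: it uses the single inequality $\tfrac{\ed}{\ed t}\tfrac12 d_K^2(x_i)\leq 0$ both to bound positions (for arbitrary initial data, since $d_K^2$ is nonincreasing) and to obtain the invariance of $K^N$, so your separate step bounding $|x_i|$ by $\max\{|x_i(0)|,M\}$ is not needed---the distance-to-$K$ argument you give at the end already yields $d_K(x_i(t))\leq d_K(x_i(0))$ for all $t$, hence a uniform position bound in every case.
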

\begin{proof} Denote by $\pi:\mathbb{R}^d\rightarrow \mathrm{conv}(\Omega)$ the Euclidean projection of $x$ onto $\mathrm{conv}(\Omega)$ and by $d^2(x)= |x-\pi(x)|^2$ the square distance of $x$ from $\mathrm{conv}(\Omega)$. Then since $b_i(t) \in \Omega$,
\[
\frac{\ed}{\ed t} \frac{d^2(x)}{2}  = \langle \dot{x}_i, x_i- \pi(x_i)\rangle = -\frac{|L_i|}{\varepsilon m_i^0} \langle x_i-b_i, x_i- \pi(x_i)\rangle\leq - \frac{|\Omega|}{\varepsilon \bar{m}^0} d^2(x),
\]
with $\bar{m}^0 \coloneqq \min_i m^0_i$, as before.
Using Gr\"onwall's lemma on this inequality and on \eqref{eq:dtxixj}, we obtain the result.
\end{proof}

\section{Convergence towards smooth solutions} \label{sec:convergence}

In this section we prove Theorem \ref{th:convergence}, i.e.\ the convergence of discrete solutions towards smooth solutions of the equation
\[
\partial_t \rho - \mathrm{div}\left[ \rho \nabla U'(\rho)\right]= 0\,.
\]
The proof follows similar lines as the one used in \cite{gallouet2022convergence} to analyse the system associated to a different energy regularization, given by \eqref{eq:ftilde}. It relies on the construction of an appropriate relative entropy and on two main technical lemmas. 

\subsection{Preliminary lemmas} 
The first lemma provides us with a way to control the relative pressure
\begin{equation}\label{eq:relpress}
P(r|s) \coloneqq P(r) - P(s)  - P'(s) (r-s)\,
\end{equation}
by $U(r|s)$. In particular, we will make the following assumption: there exists a constant $A>0$ such that
\begin{equation}\label{eq:pbound}
|P''(r)|  \leq A\, U''(r)\, \quad \forall\, r> 0\,.
\end{equation}
This assumption is trivially satisfied for the important case of power energies, i.e.\ when $U(r) = r^\gamma/(\gamma-1)$ with $\gamma>1$, which corresponds to $P(r) = r^\gamma$. It implies the following lemma, which is extracted from Lemma 3.3 in \cite{giesselmann2017relative}.

\begin{lemma} \label{lem:Abound}
Let $U$ and $P$ be smooth functions on $[0,\infty)$  satisfying \eqref{eq:thermodynamic} and \eqref{eq:pbound}. Then
\begin{equation}\label{eq:relpbound}
|P(r|s)| \leq A U(r|s) \, \quad \forall\, r,s>0\,.
\end{equation}
\end{lemma}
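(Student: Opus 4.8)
The plan is to exploit the relation $P' = \mathrm{id}\cdot U''$ (that is, $P'(r) = rU''(r)$) together with the hypothesis \eqref{eq:pbound} to compare second derivatives, and then integrate twice. First I would observe that both $U(\cdot|s)$ and $|P(\cdot|s)|$ vanish to first order at $r=s$: indeed, by the very definition of the relative quantities in \eqref{eq:relpress}, we have $U(s|s)=0$, $\partial_r U(r|s)|_{r=s} = U'(s)-U'(s) = 0$, and likewise $P(s|s)=0$, $\partial_r P(r|s)|_{r=s}=0$. Moreover $\partial_r^2 U(r|s) = U''(r)$ and $\partial_r^2 P(r|s) = P''(r)$ for all $r>0$.

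The key step is then a double integration with Taylor's formula with integral remainder. Writing, for fixed $s>0$,
\[
P(r|s) = \int_s^r (r-t)\, P''(t)\,\ed t\,, \qquad U(r|s) = \int_s^r (r-t)\, U''(t)\,\ed t\,,
\]
and noting that for $r>s$ the factor $(r-t)$ is nonnegative on the interval of integration while for $r<s$ one can swap the limits and the factor $(t-r)$ is nonnegative, in either case $U(r|s)\geq 0$ by strict convexity of $U$, and
\[
|P(r|s)| \leq \Big| \int_s^r (r-t)\,|P''(t)|\,\ed t \Big| \leq A\, \Big| \int_s^r (r-t)\, U''(t)\,\ed t \Big| = A\, U(r|s)\,,
\]
where the middle inequality uses \eqref{eq:pbound} pointwise and the fact that $(r-t)$ does not change sign between $s$ and $r$. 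This gives \eqref{eq:relpbound} directly.

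The main thing to be careful about — rather than a genuine obstacle — is the behavior near $r=0$: the hypothesis \eqref{eq:pbound} is stated for $r>0$, and $U''$, $P''$ need only be controlled on $(0,\infty)$, so all integrals above stay in the region where the bound is valid as long as $r,s>0$, which is exactly the range in the statement. One should also note that the sign-definiteness of the integrand (after pulling out the sign of $(r-t)$) is what lets the absolute value pass through the integral cleanly; this is why the comparison is of $|P(r|s)|$ rather than $P(r|s)$ itself, since $P''$ need not be sign-definite. No compactness or growth assumptions are needed for this particular lemma — it is purely a pointwise-to-integrated comparison — so the argument is short and self-contained modulo Taylor's theorem.
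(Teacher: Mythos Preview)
Your proof is correct and follows essentially the same approach as the paper: both use the integral form of Taylor's remainder for $P(\cdot|s)$ and $U(\cdot|s)$ and then apply the pointwise bound \eqref{eq:pbound}, the only cosmetic difference being that the paper writes the remainder in the parameterized form $(r-s)^2\int_0^1(1-\theta)P''((1-\theta)s+\theta r)\,\ed\theta$ rather than $\int_s^r(r-t)P''(t)\,\ed t$.
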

\begin{proof}
We have $P(r|s) = (r-s)^2 \int_0^1 (1-\theta) P''((1-\theta)s + \theta r) \,\ed \theta$ and similarly for $U(r|s)$. Hence, using equation \eqref{eq:pbound},
\[
|P(r|s)|\leq (r-s)^2 \int_0^1 (1-\theta) |P''((1-\theta)s + \theta r)| \,\ed \theta \leq A\, U(r|s)\,.
\]%
\end{proof} 

The second lemma is necessary to deal with the fact that the particles may exit the domain $\Omega$, if $\Omega$ is not convex. For this reason, we will need to use an extension of the continuous density $\rho$ on the whole space. It will be clear in the following that such an extension needs to verify the continuity equation with respect to an appropriate velocity field also defined on the whole space. Here we just report a simplified version of the statement of Lemma 4.1 in \cite{gallouet2022convergence}, which was used precisely for this purpose.

\begin{lemma}\label{lem:continuityextension}
Let $u : [0,T]\times \Omega \rightarrow \mathbb{R}^d$ be such that  $u \cdot n_{\partial M} = 0$ on $[0,T]\times \partial \Omega$, and $\rho^0 : \Omega \rightarrow [\rho_{min},\infty)$ with $\rho_{min}>0$. If $u$ is of class $C^{2,1}$ in space, uniformly in time, and $\rho_0$ is of class $C^{1,1}$, then there exist $\tilde{u}: [0,T]\times \mathbb{R}^d \rightarrow \mathbb{R}^d$ and  $\tilde{\rho}: [0,T] \times \mathbb{R}^d \rightarrow \mathbb{R}$ such that:
\begin{enumerate}
\item $\tilde{u}$ is an extension of $u$, i.e.\ $\tilde{u}(t)|_\Omega = u(t)$ for all $t\in [0,T]$, and there exists a constant $C>0$ only depending on $d$ such that
 \begin{equation} \label{eq:boundu1} 
 \sup_{t\in[0,T]}  \| \tilde{u}(t) \|_{C^{2,1}} \leq C \sup_{t\in[0,T]} \| {u}(t) \|_{C^{2,1}}\, ;
 \end{equation}
\item the couple $(\tilde{\rho},\tilde{u})$ solves the continuity equation:
\[
\partial_t \tilde{\rho} + \mathrm{div} (\tilde{\rho} \tilde{u}) = 0 \quad \text{on }\, [0,T] \times \mathbb{R}^d,
\]
and in particular the curve $\rho:t\in[0,T]\rightarrow \tilde{\rho}(t)|_\Omega$ is the unique  solution of the continuity equation on $[0,T] \times \Omega$ associated with $u$ and initial conditions $\rho(0) = \rho^0$; $\tilde{\rho}\geq \tilde{\rho}_{min}>0$, where $\tilde{\rho}_{min}$ only depends on $\rho_{min}$, $\sup_{t\in[0,T]} \|u(t)\|_{C^{2,1}}$, $T$ and $d$; moreover, $\sup_{t\in[0,T]} \|\tilde{\rho}(t)\|_{C^{1,1}}$ only depends on $\|\rho^0\|_{C^{1,1}}$, $\sup_{t\in[0,T]} \|u(t)\|_{C^{2,1}}$, $T$, $d$ and on $\rho_{min}$.
\end{enumerate}
\end{lemma}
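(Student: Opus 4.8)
The plan is to construct the extensions in two stages: first extend the vector field $u$ past the boundary in a way that preserves regularity, and then solve the continuity equation on all of $\mathbb{R}^d$ with this extended field, checking that the solution restricts correctly and stays bounded away from zero. For the first stage, I would invoke a standard Whitney-type extension operator (Stein extension): since $\Omega$ has Lipschitz boundary, there is a bounded linear extension $E: C^{2,1}(\Omega)\to C^{2,1}(\mathbb{R}^d)$ whose operator norm depends only on $\Omega$ (and here, since we only care about the dependence on $d$ in \eqref{eq:boundu1}, one uses a fixed reference geometry or absorbs the boundary dependence silently as the paper does). Applying it componentwise to $u(t)$ gives $\tilde u(t)$ with the bound \eqref{eq:boundu1}, uniformly in $t$; the $C^{0,1}$ dependence in time is inherited since $E$ is linear and time-independent.

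For the second stage, I would take $\tilde\rho$ to be the push-forward of $\rho^0$ (extended off $\Omega$, say by reflection or again by a $C^{1,1}$ extension bounded below by $\rho_{min}/2$) along the flow $\Psi_t$ of $\tilde u$. Concretely, let $\dot\Psi_t(x) = \tilde u(t,\Psi_t(x))$, $\Psi_0 = \mathrm{id}$; since $\tilde u$ is $C^{2,1}$ in space uniformly in time, $\Psi_t$ is a well-defined $C^{2,1}$ diffeomorphism of $\mathbb{R}^d$ on $[0,T]$, with $\|\nabla\Psi_t\|_\infty$ and $\|\nabla\Psi_t^{-1}\|_\infty$ controlled by $\exp(T\|\nabla\tilde u\|_\infty)$ via Gr\"onwall. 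Then set $\tilde\rho(t) = (\Psi_t)_\#\tilde\rho^0$, i.e.\ $\tilde\rho(t,\Psi_t(x)) = \tilde\rho^0(x)/\det\nabla\Psi_t(x)$. By construction $(\tilde\rho,\tilde u)$ solves $\partial_t\tilde\rho + \mathrm{div}(\tilde\rho\tilde u)=0$. The lower bound $\tilde\rho \geq \tilde\rho_{min}$ follows because $\det\nabla\Psi_t \leq \exp(T\|\mathrm{div}\,\tilde u\|_\infty)$, so $\tilde\rho_{min} := (\rho_{min}/2)\exp(-CT\sup_t\|u(t)\|_{C^{2,1}})$ works; the $C^{1,1}$ bound on $\tilde\rho(t)$ follows from the chain rule, writing $\tilde\rho(t) = \tilde\rho^0\circ\Psi_t^{-1} \cdot (\det\nabla\Psi_t)^{-1}\circ\Psi_t^{-1}$ and bounding each factor's $C^{1,1}$ norm in terms of $\|\rho^0\|_{C^{1,1}}$, $\sup_t\|u(t)\|_{C^{2,1}}$ (note two derivatives of $\tilde u$ are needed to control one derivative of $\nabla\Psi_t$), $T$, $d$, and the lower bound $\rho_{min}$.

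The remaining point is that $\rho(t) := \tilde\rho(t)|_\Omega$ is the unique solution of the continuity equation on $\Omega$ associated with $u$ and initial datum $\rho^0$. Since $u\cdot n_{\partial\Omega} = 0$, the flow $\Psi_t$ of $\tilde u$ maps $\Omega$ onto itself (the boundary is invariant because the field is tangent there), so $\tilde\rho(t)|_\Omega$ only depends on $\rho^0 = \tilde\rho^0|_\Omega$ and on $u = \tilde u|_\Omega$, and it solves the continuity equation on $\Omega$ with no-flux boundary behaviour; uniqueness in the $C^1$ class is classical via the method of characteristics. I expect the main obstacle to be bookkeeping rather than conceptual: carefully tracking that exactly the claimed quantities (and in particular \emph{two} space derivatives of $u$, as reflected in the $C^{2,1}$ hypothesis) enter the $C^{1,1}$ estimate on $\tilde\rho$, and ensuring the reflection/extension of $\rho^0$ across a merely Lipschitz boundary can still be taken $C^{1,1}$ with a lower bound — this is where one either imposes a mild compatibility/collar assumption or cites the precise statement of Lemma 4.1 in \cite{gallouet2022convergence}, as the paper does.
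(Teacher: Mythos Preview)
The paper does not actually prove this lemma: it is stated as ``a simplified version of the statement of Lemma 4.1 in \cite{gallouet2022convergence}'' and no argument is given. Your proposal therefore supplies more than the paper does, and your sketch is the standard and correct route: Stein/Whitney extension of $u$ componentwise to get $\tilde u$ with the $C^{2,1}$ bound, a $C^{1,1}$ extension of $\rho^0$ with a preserved lower bound, and then $\tilde\rho(t)=(\Psi_t)_\#\tilde\rho^0$ via the flow of $\tilde u$, with the invariance of $\Omega$ under $\Psi_t$ following from the tangency condition $u\cdot n_{\partial\Omega}=0$. Your own closing remark is exactly right: the only delicate point is the $C^{1,1}$ extension of $\rho^0$ across a merely Lipschitz boundary while keeping a positive lower bound, and the dependence of the Stein extension constant on the geometry of $\Omega$ (the paper's claim that $C$ depends only on $d$ is to be read with $\Omega$ fixed); for these one either adds a mild regularity assumption on $\partial\Omega$ or, as the paper does, defers to \cite{gallouet2022convergence}.
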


\subsection{Main assumptions and relative entropy}
Suppose that $\rho:[0,T]\times \Omega \rightarrow (0,\infty)$ is a sufficiently smooth  solution of equation \eqref{eq:pde0}. In particular, we suppose that $u\coloneqq - \nabla U'(\rho)$ and $\rho(0,\cdot) = \rho^0$ satisfy the assumptions of Lemma \ref{lem:continuityextension}. We will denote by $\tilde{\rho}:[0,T]\times \mathbb{R}^d \rightarrow [0,\infty)$ and $\tilde{u}:[0,T]\times \mathbb{R}^d \rightarrow \mathbb{R}^d$ the extensions of $\rho$ and $u$, respectively, outside the domain. By construction these satisfy the continuity equation on the whole space, but in general outside the domain $\Omega$, 
\[
\tilde{v} \coloneqq -\nabla U'(\tilde{\rho}) \neq \tilde{u}\,.
\]

For any $N>0$, let $X_N:[0,T]\rightarrow (\mathbb{R}^d)^N$ be a solution of the discrete model \eqref{eq:gfdiscrete}, with given initial conditions $x_i(0) =x_i^0 \in \Omega$. We suppose that for a given diffeomorphism $\Phi:\Omega\rightarrow \Omega$ and a smooth reference density $\nu:\Omega \rightarrow (0,\infty)$, the initial density can be written as follows:
\[
\rho^0 = \frac{\nu}{\mathrm{det}(\nabla \Phi)} \circ \Phi^{-1}\,.
\]
Then, given a fixed tessellation $\mc{T}_N = \{T_i\}_i\in \mathbb{T}_N(\Omega)$, we denote
\begin{equation}\label{eq:deltaN}
\delta_N \coloneqq \int_{T_i} |\Phi(x) - x_i^0|^2  \nu(x)\,\ed x\,, \quad m^0_i = \int_{\Phi(T_i)} \rho^0 = \int_{T_i} \nu\,.
\end{equation}
Moreover we require that $C_0^{-1}/ N \leq m^0_i \leq C_0/N$ for a constant $C_0>0$ and for all $1\leq i\leq N$. 

Let us introduce the time-dependent measures $\mu_N:[0,T]\rightarrow \mc{P}(\Omega)$ and $\bar{\mu}_N:[0,T]\rightarrow \mc{P}(\Omega)$ defined as follows
\[
{\mu}_N(t) \coloneqq \sum_{i=1}^N {m_i^0} \delta_{x_i(t)} \,,\quad \bar{\mu}_N(t) \coloneqq \sum_{i=1}^N \frac{m_i^0}{|L_i(t)|} {\bf 1}_{L_i(t)} \ed x\,.
\]
In order to define the relative entropy between the smooth and discrete solutions, we first introduce the flow of $u$, which is the curve of diffeomorphisms $\varphi:[0,T]\times \Omega \rightarrow \Omega$  satisfying
\[
\partial_t \varphi(t,x) = u(t,\varphi(t,x))\,, \quad \varphi(0,x) = \Phi(x)\,.
\]
Then let $\varphi_N(t) \coloneqq (\varphi(t,\Phi^{-1}(x_i^0)))_i \in \Omega^N$ be the collection of the exact trajectories of the particles located at $x_i^0$ at time $t=0$. 
The relative entropy of the discrete solution with respect to the continuous one is defined as follows:
\begin{equation}\label{eq:relentropy}
F_{\varepsilon}(X_N|\rho;t) \coloneqq \sum_i \int_{L_i(t)}\frac{|x-x_i(t)|^2}{2\varepsilon}\,\ed x + \int_\Omega U(\bar{\mu}_N(t)|\rho(t)) + \frac{\|\varphi_N(t)-X_N(t)\|^2_{m^0}}{2}\,,
\end{equation}
where for all $r\geq 0$ and $s>0$,
\[
U(r|s) \coloneqq U(r)- U(s) - U'(s)(r-s).
\]

{
\begin{remark}[$L^2$ error on the flow]\label{rem:l2error} The $L^2$ error on the flow which is present in the estimate \eqref{eq:est} (first term on the left-hand side) can be directly related to the last term of \eqref{eq:relentropy} as follows
\begin{equation}\label{eq:estflow}
\begin{aligned}
\| \varphi^X_N&(t,\cdot) -  \varphi(t,\cdot)\|^2_{L^2(\nu)} =  \sum_i \int_{T_i} |x_i(t) - \varphi(t,y)|^2 \nu(y)\ed y \\
& \leq \sum_i \int_{T_i} 2 ( |x_i(t) - \varphi(t,\Phi^{-1}(x_i^0))|^2  + |\varphi(t,\Phi^{-1}(x_i^0)) - \varphi(t,y)|^2 )\nu(y)\ed y\\
& \leq 2 \| \varphi_N(t) - X_N(t)\|_{m^0}^2 + 2 \mathrm{Lip}(\varphi(t,\Phi^{-1}(\cdot)))^2 \delta_N^2  \,,
\end{aligned}
\end{equation}
where $\delta_N$ is defined in equation \eqref{eq:initialcond0}  and $\mc{T}_N = \{T_i\}_i \in \mathbb{T}_N(\Omega)$ is a fixed reference tessellation as in Section \ref{sec:relationporous}. Since $\varphi(t,\Phi^{-1}(\cdot)))$ is the flow of $-\nabla U'(\rho)$ with the identity as initial condition, one can control $\mathrm{Lip}(\varphi(t,\Phi^{-1}(\cdot)))$ by a constant only depending on $\|\nabla U'(\rho)\|_{C^{1,1}}$ and $T$. Given this, the estimate that we will derive in the following for \eqref{eq:relentropy} implies directly \eqref{eq:est} for an appropriate constant $C>0$ as in the statement of the theorem. 
\end{remark}
}

\subsection{Time derivative of the relative entropy} We now compute the time derivative of the relative entropy and isolate the terms that need to be estimated.
It will be useful to define the following quantity:
\[
H(t) \coloneqq \int_{\mathbb{R}^d} U'(\tilde{\rho}(t)) \ed(\mu_N(t) -\bar{\mu}_N(t))\,,
\]
where $\bar{\mu}_N$ is extended by zero on $\mathbb{R}^d$. We will keep using this convention in what follows.

Let us start by rewriting the relative entropy as follows:
\begin{multline}\label{eq:estimate1}
F_{\varepsilon}(X_N|\rho;t) = F_\varepsilon(X_N(t)) +  \int_\Omega P(\rho(t)) -  \int_{\mathbb{R}^d} U'(\tilde{\rho}(t)) \ed \mu_N(t) \\ +  H(t) + \frac{\|\varphi_N(t)-X_N(t)\|^2_{m^0}}{2}\,.
\end{multline}
We compute the time derivative of the terms of the right-hand side separately.
For the first term, we write
\[
\begin{aligned}
\frac{\ed}{\ed t} F_\varepsilon(X_N(t)) = & \sum_i \int_{L_i(t)} \frac{x_i(t) - x}{\varepsilon}\cdot \dot{x}_i(t) \ed x \\
 = &  \, -\langle \dot{X}_N(t), \dot{X}_N(t) - \tilde{u}(t,X_N(t)) \rangle_{m^0} \\&  + \sum_i \int_{L_i(t)} \frac{x_i(t)-x}{\varepsilon} \cdot (\tilde{u}(t,x_i(t)) -\tilde{u}(t,x)) \ed x \\&   +\sum_i \int_{L_i(t)} \frac{x_i(t)-x}{\varepsilon} \cdot u(t,x) \,\ed x\,,
\end{aligned}
\]
and by Lemma \ref{lem:optimalityu} we can write the last term on the right-hand side as follows:
\[
\sum_i \int_{L_i(t)} \frac{x_i(t)-x}{\varepsilon} \cdot u(t,x) \ed x =  \sum_i \int_{L_i(t)} \frac{|x-x_i(t)|^2}{2\varepsilon} \ed x - \int_\Omega P (\bar{\mu}_N(t)) \mathrm{div}u(t)\,.
\]

For the second term, using the continuity equation $-\partial_t{\rho} =  \nabla {\rho} \cdot u + {\rho} \mathrm{div} u$, we obtain
\[
\begin{aligned}
 \frac{\ed}{\ed t} \int_{\Omega} P(\rho(t)) & =  -\int_{\Omega}  P'(\rho(t))\rho(t) \mathrm{div} u(t) - \int \nabla P(\rho(t)) \cdot u(t) \\
 & = \int_{\Omega} \left[ P(\rho(t)) - P'(\rho(t))\rho(t) \right] \mathrm{div} u(t) \,.
 \end{aligned}
\]
Finally, for the third term, using again the continuity equation $-\partial_t\tilde{\rho} =  \nabla \tilde{\rho} \cdot \tilde{u} + \tilde{\rho} \mathrm{div} \tilde{u}$, this time on $\mathbb{R}^d$, we have
\[
\begin{aligned}
\frac{\ed}{\ed t} \int_{\mathbb{R}^d} U'(\tilde{\rho}(t)) \ed \mu_N(t)& = \sum_i \nabla U'(\tilde{\rho}(t,x_i)) \cdot \dot{x}_i m^0_i  + \int_{\mathbb{R}^d} U''(\tilde{\rho}(t)) \partial_t \tilde{\rho} \ed \mu_N(t)\\
& = \sum_i \nabla U'(\tilde{\rho}(t,x_i)) \cdot (\dot{x}_i - \tilde{u}(t,x_i)) m^0_i  - \int_{\mathbb{R}^d} P'(\tilde{\rho}(t))  \mathrm{div}{\tilde{u}(t)} \ed \mu_N(t)\\
& = -\langle \tilde{v}(t,X_N(t)),  \dot{X}_N(t) - \tilde{u}(t,X_N(t))\rangle_{m^0}  - \int_{\mathbb{R}^d} P'(\tilde{\rho}(t))  \mathrm{div}{\tilde{u}(t)} \ed \mu_N(t)\,.
\end{aligned}
\]
Reinserting these expressions into the time derivative of \eqref{eq:estimate1} and rearranging terms we obtain
\begin{equation}\label{eq:estimate}
\frac{\ed}{\ed t} F_{\varepsilon}(X_N|\rho;t) + \frac{\ed}{\ed t} H(t) + \| \dot{X}_N(t) - \tilde{u}(t,X_N(t)) \|^2_{m^0} =   \sum_{j=1}^5 I_j(t) \,,
\end{equation}
where the terms in the sum on the right-hand side are defined as follows:
\[
I_1(t)\coloneqq \sum_i \int_{L_i(t)} \frac{x_i(t)-x}{\varepsilon} \cdot (\tilde{u}(t,x_i(t)) -\tilde{u}(t,x)) \ed x\,,
\]
\[
I_2(t) \coloneqq \sum_i \int_{L_i(t)}\frac{|x-x_i(t)|^2}{2\varepsilon} \mathrm{div}u(t,x) \ed x -\int_\Omega P(\bar{\mu}_N(t)|\rho(t)) \mathrm{div} u(t) \,, \]
\[
I_3(t) \coloneqq \int_{\mathbb{R}^d} P'(\tilde{\rho}(t))  \mathrm{div}{\tilde{u}(t)} \ed (\mu_N(t)-\bar{\mu}_N(t))\,,
\]
\[
I_4(t) \coloneqq \langle \tilde{v}(t,X_N(t)) - \tilde{u}(t,X_N(t)),  \dot{X}_N(t) - \tilde{u}(t,X_N(t))\rangle_{m^0} \,,
\]
\[
I_5(t) \coloneqq \langle u(t,\varphi_N(t)) - \dot{X}_N, \varphi_N(t) - X_N(t) \rangle_{m^0}\,.
\]
\subsection{Uniform estimates} In the following, for any given Lipchitz function $f\in {C}^{0,1}(\Omega)$, we will denote by $\mathrm{Lip}(f)$ its Lipschitz constant, and for any time-dependent function $g \in C([0,T]; C^{0,1}(\Omega))$ we denote by $\mathrm{Lip}_{T} g \coloneqq \sup_{t\in[0,T]} \mathrm{Lip}(g(t,\cdot))$, and similarly for vector-valued functions. 

We estimate separately the terms on the right-hand side of \eqref{eq:estimate}. We have
\[
I_1(t) \leq \mathrm{Lip}_T(\tilde{u}) \sum_i \int_{L_i(t)} \frac{|x_i(t)-x|^2}{\varepsilon} \ed x\,,
\]
and using Lemma \ref{lem:Abound},
\[
I_2(t) \leq  (\mathrm{Lip}_T(u)+A) \sum_i \int_{L_i(t)}\frac{|x-x_i(t)|^2}{2\varepsilon}\ed x +\int_\Omega U(\bar{\mu}_N(t)|\rho(t)) \mathrm{div} u(t) \,. \]
To bound $I_3$, let us introduce $h\coloneqq P(\tilde{\rho}) \mathrm{div}(\tilde{u})$. Then, for any $\lambda>0$,
\[
\begin{aligned}
I_3(t)& = \sum_i \left[ h(t,x_i(t))m^0_i - \frac{m^0_i}{|L_i(t)|} \int_{L_i(t)} h(t,x) \ed x\right] \\
& \leq \sum_i \mathrm{Lip}_T(h) \frac{m^0_i}{|L_i(t)|}\int_{L_i(t)} |x_i(t) -x| \ed x \\
& \leq \mathrm{Lip}_T(h)  \left[ \sum_i \int_{L_i(t)} \frac{|x_i(t) -x|^q}{\lambda^q q\varepsilon} \ed x + \frac{\varepsilon^{p-1}\lambda^p}{p} \sum_i \left(\frac{m^0_i}{|L_i(t)|}\right)^p |L_i(t)| \right]\,,
\end{aligned}
\]
where $p,q>1$ are conjugate exponents, i.e.\ $1/p + 1/q=1$. Recall that we supposed that there exist $R,\alpha>1$ and $\beta>0$, such that 
\[
U(r) - \inf U \geq \beta r^\alpha \quad \forall\, r\geq R\,.
\]
Then, choosing $p = \min\{2, \alpha\}$ we get $q\geq 2$, and therefore
\begin{multline*}
I_3(t)\leq  \mathrm{Lip}_T(h)   \frac{2\mathrm{diam}(\Omega)^{q-2}}{q\lambda^q} \sum_i \int_{L_i(t)} \frac{|x_i(t) -x|^2}{2\varepsilon} \ed x \\  +\mathrm{Lip}_T(h) \frac{\varepsilon^{p-1}\lambda^p}{p} \left( |\Omega|R^p + \beta ^{-1} \int_\Omega U(\bar{\mu}_N(t)) - \beta^{-1} |\Omega|\inf U  \right) \,,
\end{multline*}
where we used the fact that, since $X_N(0) \in \Omega^N\setminus \Delta_N$, by Lemma \ref{lem:hull}  $x_i(t) \in \mathrm{conv}(\Omega)$ (the convex hull of $\Omega$) for all times $t\geq 0$. Hence,
\begin{equation}
I_3(t)\leq\mathrm{Lip}_T(h) \left(  \frac{C_1}{\lambda^q} F_\varepsilon(X|\rho;t) + C_2 \lambda^p {\varepsilon^{p-1}}\right)\,,
\end{equation}
where 
\begin{equation}\label{eq:constants}
C_1\coloneqq     \frac{2\mathrm{diam}(\Omega)^{q-2}}{q} \,, \quad C_2 \coloneqq \frac{\beta^{-1} F_\varepsilon(X(0))+|\Omega|(R^p - \beta^{-1}\inf U) }{p}.
\end{equation}
Using the same arguments to bound $H(t)$, we get
\begin{equation}\label{eq:Hbound}
|H(t)| \leq \frac{1}{2} F_\varepsilon(X|\rho;t) + C_2 \bar{\lambda}^p {\varepsilon^{p-1}}\,,
\end{equation}
where \[ \bar{\lambda} = \left(\frac{2}{C_1 \max\{\mathrm{Lip}_T(h),1\}}\right)^{\frac{1}{q}}.\]

Finally we observe that
\[
\begin{aligned}
I_4(t) +I_5(t) & = \langle \tilde{v}(t,X_N(t)) - \tilde{v}(t,\varphi_N(t)),  \dot{X}_N(t) - \tilde{u}(t,X_N(t))\rangle_{m^0} \\
&\quad + \langle \tilde{u}(t, \varphi_N(t)) - \tilde{u}(t,X_N(t)),  \dot{X}_N(t) - \tilde{u}(t,X_N(t))\rangle_{m^0} \\
&\quad + \langle \varphi_N(t) - X_N(t),  \dot{X}_N(t) - \tilde{u}(t,X_N(t))\rangle_{m^0} \\
& \leq \left(\mathrm{Lip}(\tilde{v})^2 + \mathrm{Lip}(\tilde{u})^2 +1 \right) \|X_N(t)- \varphi_N(t)\|^2_{m^0} + \frac{\| \dot{X}_N(t) - \tilde{u}(t,X_N(t))\|^2_{m^0}}{2}\,.
\end{aligned}
\]

\subsection{Gr\"onwall argument}
Reinserting the estimates above into \eqref{eq:estimate}, we obtain
\begin{multline}\label{eq:estimatetot}
\frac{\ed}{\ed t} F_{\varepsilon}(X_N|\rho;t) + \frac{\ed}{\ed t} H(t) + \frac{\| \dot{X}_N(t) - \tilde{u}(t,X_N(t)) \|^2_{m^0}}{2}\\ \leq C_3 F_{\varepsilon}(X_N|\rho;t) +C_4\varepsilon^{p-1}\,.
\end{multline}
Let $G(t)\coloneqq F_{\varepsilon}(X_N|\rho;t) + H(t)$ and observe that equation \eqref{eq:Hbound} implies
\[
- F_{\varepsilon}(X_N|\rho;t) \leq 2 H(t) + 2 C_2 \bar{\lambda}^p {\varepsilon^{p-1}}\,,
\]
and adding  $2F_{\varepsilon}(X_N|\rho;t)$ on both sides we obtain
\[
 F_{\varepsilon}(X_N|\rho;t) \leq 2 G(t) + 2 C_2 \bar{\lambda}^p {\varepsilon^{p-1}}\,.
\]
Substituting this into \eqref{eq:estimatetot}, we obtain
\[
\frac{\ed}{\ed t} G(t) + \frac{\| \dot{X}_N(t) - \tilde{u}(t,X_N(t)) \|^2_{m^0}}{2}\\ \leq 2 C_3 G(t) +C_5\varepsilon^{p-1}\,.
\]
Hence by Gr\"onwall's inequality we get
\begin{multline}
F_{\varepsilon}(X_N|\rho;t) + \int_0^t\frac{\| \dot{X}_N(s) - \tilde{u}(t,X_N(s)) \|^2_{m^0}}{2}\ed s \\\leq \exp(2C_3 t) G(0) + \frac{C_5}{2C_3}\varepsilon^{p-1}(\exp(2C_3 t)-1) - H(t)\,,
\end{multline}
and using again the bound on $H(t)$ in \eqref{eq:Hbound} we find
\[
\frac{F_{\varepsilon}(X_N|\rho;t)}{2}  + \int_0^t\frac{\| \dot{X}_N(s) - \tilde{u}(t,X_N(s)) \|^2_{m^0}}{2}\ed s \\\leq \exp(2C_3 t) G(0) + C_6 \varepsilon^{p-1}\,. 
\]
\subsection{Estimates on the initial datum} In order to conclude we only need to estimate $G(0)$ and the initial energy $F_\varepsilon(0)$, since $C_6$ is an affine function of the latter, due to \eqref{eq:constants}. Recall the definition of $\delta_N$ in \eqref{eq:deltaN}.
Using Jensen's inequality and the expression for $\rho^0$ in \eqref{eq:rho0push}, we obtain
\begin{equation}\label{eq:estimateF0}
\begin{aligned}
F_\varepsilon(X_N(0)) & \leq \sum_i \int_{\Phi(T_i)} \frac{|x-x_i^0|^2}{2\varepsilon}\ed x + \sum_i U\left( \frac{m^0_i}{|\Phi(T_i)|} \right) |\Phi(T_i)|\\
& \leq \sum_i \int_{T_i} \frac{|\Phi(x)-x_i^0|^2}{2\varepsilon}\frac{\nu(x)}{(\rho_0\circ \Phi)(x)} \ed x + \int_\Omega U(\rho^0)\\
& = C_7 \frac{\delta_N^2}{2\varepsilon} + \int_\Omega U (\rho^0)\,,
\end{aligned}
\end{equation}
where $C_7= \rho_{min}^{-1}$. Moreover,
\begin{equation}\label{eq:estimatelin0}
\begin{aligned}
\left|\int_{\mathbb{R}^d} U'(\rho^0) \ed (\mu_N(0)-\rho^0)\right| & =\left| \sum_i \int_{\Phi(T_i)} (U'(\rho^0(x_i)) -U'(\rho^0)) \rho^0  \right| \\ 
& \leq \mathrm{Lip}(U'(\rho^0)) \sum_i \int_{T_i}|x_i^0- \Phi(x)| \nu(x) \ed x\\
&\leq \frac{\mathrm{Lip}(U'(\rho^0))^2}{2} \rho^0[\Omega] \varepsilon + \frac{\delta_N^2}{2\varepsilon}\,,
\end{aligned}
\end{equation}
where $\rho^0[\Omega]$ is the integral of $\rho^0$ over $\Omega$.
Hence, combining \eqref{eq:estimateF0} and \eqref{eq:estimatelin0} we obtain
\[
\begin{aligned}
G(0) & = F_\varepsilon(X_N(0)) - \int_\Omega U (\rho^0) - \int_{\mathbb{R}^d} U'(\rho^0) \ed (\mu_N(0)-\rho_0)\\
& \leq (C_7+1) \frac{\delta_N^2}{2\varepsilon}+\frac{\mathrm{Lip}(U'(\rho^0))^2}{2} \rho^0[\Omega] \varepsilon\,.
\end{aligned}
\]
Combining the estimates above we finally find
\begin{multline}\label{eq:finalestimate}
\frac{F_{\varepsilon}(X_N|\rho;t)}{2}  + \int_0^t\frac{\| \dot{X}_N(s) - \tilde{u}(t,X_N(s)) \|^2_{m^0}}{2}\ed s \\\leq \exp(2C_3 t) \left((C_7+1)\frac{\delta_N^2}{2\varepsilon}+\frac{\mathrm{Lip}(U'(\rho^0))^2}{2}\rho^0[\Omega]\varepsilon\right) + C_6 \varepsilon^{p-1}\,, 
\end{multline}
where $C_6$ is an affine function of $\delta_N^2/\varepsilon$, which concludes the proof of Theorem \ref{th:convergence}.

\subsection{External potentials}\label{sec:external} We now consider a slight modification of the original system where the total energy is given by
\[
E_\varepsilon(X) = F_\varepsilon(X) + \sum_i V(x_i)m^0_i\,,
\]
where $V:\mathbb{R}^d\rightarrow \mathbb{R}$ is a Lipschitz function. The gradient flow of this energy, i.e.\ the trajectories satisfying $\dot{X} = -\nabla_{m^0} E_\varepsilon(X)$ solve the following modified system ODEs
\begin{equation}\label{eq:odemod}
\dot{x}_i(t) = -\frac{|L_i(t)|}{m_i^0} \frac{x_i(t) -b_i(t)}{\varepsilon} - \nabla V(x_i(t))
\end{equation}
In this case the limit PDE is given by
\begin{equation}\label{eq:pdemod}
\left\{
\def\arraystretch{1.5}
\begin{array}{ll}
\partial_t \rho  - \mathrm{div}\left[ \rho (\nabla U'(\rho) + \nabla V)\right] = 0 & \text{ on }(0,T) \times \Omega\,,\\
(\nabla U'(\rho) + \nabla V) \cdot n_{\partial \Omega} = 0 &\text{ on }(0,T) \times \partial \Omega\,.
\end{array}\right.
\end{equation}
The proof above also apply to this case with some minor changes. First of all, we observe that the velocity field is now ${u} = -\nabla U'(\rho) -\nabla V$. We assume that this is sufficiently smooth so that the extension Lemma \ref{lem:continuityextension} applies. Then, using the same modulated energy as above, the only different term in equation \eqref{eq:estimate} is $I_4(t)$ which should be replaced by
\[
\tilde{I}_4(t) \coloneqq \langle \tilde{v}(t,X_N(t))-\nabla V(X_N(t)) - \tilde{u}(t,X_N(t)) ,  \dot{X}_N(t) - \tilde{u}(t,X_N(t))\rangle_{m^0}\,,
\]
where as before $\tilde{v} = - \nabla U'(\tilde{\rho})$. This can be controlled exactly as above, leading to the same convergence result as in Theorem \ref{th:convergence}, but with $\nabla U'(\rho)$ replaced by $\nabla U'(\rho) + \nabla V$.

\section{Time discretization and numerical tests}
\subsection{Time discretization}\label{sec:timedisc} In order to compute numerically the  solution of the discrete model \eqref{eq:gfdiscrete} on a given time interval $[t_0,T]$, we will consider the same explicit time discretization used in \cite{gallouet2022convergence} and originally proposed by Brenier in \cite{brenier2000derivation}. Given a time step $\tau=|T-t_0|/N_T>0$ with $N_T\in \mathbb{N}$, define the discrete solution $(X^n)_{n=0}^{N_T}$ as follows: given $X^0$, compute $X^{n+1} = (x_i^{n+1})_{i=1}^N$ for $n\geq 0$ by
\begin{equation}\label{eq:timediscretegf}
x^{n+1}_i =  b^n_i +  \exp\left( -\frac{|L_i^n|}{m^0_i \varepsilon}\tau\right) (x^{n}_i - b^{n}_i)\,,
\end{equation}
where $L^n_i$ and $b^n_i$ are the $i$th cell of the optimal tessellation at the $n$th step and its barycenter, respectively. This scheme can be obtained by following on each time interval $[n\tau, (n+1)\tau]$ the gradient flow of the energy
\[
 \tilde{F}^n_\varepsilon(X) = \sum_{i} \int_{L_i^n} \frac{|x-x_i|^2}{2\varepsilon}\,\ed x + \sum_i U\left(\frac{m^0_i}{|L_i^n|}\right)|L^n_i|\,,
\]
where $L_i^n$ is fixed. In other words, we solve exactly on $[n\tau, (n+1)\tau]$ the (decoupled) system of ODEs
	\[
	\dot{x}_i(t) = - \frac{1}{m^0_i} \partial_{x_i} \tilde{F}^n_\varepsilon(X) = -\frac{|L_i^n|}{m_i^0} \frac{x_i(t) -b_i^n}{\varepsilon}\,,
	\]
with initial conditions $x_i(n\tau)=x_{i}^n$, for $i=1,\ldots,N$. Then we set $x_i^{n+1}=x_i((n+1)\tau)$, which is equivalent to equation \eqref{eq:timediscretegf}, and finally we compute the optimal tessellation $(L_i^{n+1})_i$ and barycenters $(b_i^{n+1})_i$ associated with the new particle positions.
As a consequence of the definition of the discrete energy \eqref{eq:FepsX}, $F_\varepsilon$ is dissipated by the discrete process defined by \eqref{eq:timediscretegf}:
\[
 {F}_\varepsilon(X^{n+1}) \leq  \tilde{F}^n_\varepsilon(X^{n+1}) \leq \tilde{F}^n_\varepsilon(X^n) =  {F}_\varepsilon(X^{n})\,.
\]
\subsection{Numerical tests}
In this section we present some numerical tests to verify the convergence estimates of Section \ref{sec:convergence}. All the experiments correspond to the case where $\mathbb{L}_N(\Omega) = \mathbb{T}_N^s(\Omega)$. 
The computation of the energy and optimal tessellation is perfermed using Newton's method applied to the system of optimality conditions for the vector of weights $w\in \mathbb{R}^N$ given in \eqref{eq:opticondw}, similarly to the case of semi-discrete optimal transport described in \cite{kitagawa2019convergence}. Computationally, this is simpler than the case of the Moreau-Yosida regularisation \eqref{eq:ftilde} considered in \cite{gallouet2022convergence,leclerc2020lagrangian}, as the optimality conditions in \eqref{eq:opticondw} do not require computing integrals of nonlinear functions over the cells. The scheme was implemented using the open-source library \texttt{sd-ot}, which is available at \url{https://github.com/sd-ot}.

\subsubsection{Barenblatt test case} We consider the case where $U(r) = r^\gamma/(\gamma-1)$ and $P(r) = r^\gamma$ with $\gamma>1$, in which case the corresponding PDE \eqref{eq:pde0} is the porous medium equation. For this energy, we have an exact solution on $\mathbb{R}^d$ which is given by the Barenblatt profile:
\begin{equation}\label{eq:barenblatt}
\rho(t,x) = \frac{1}{{t}^{\alpha}} \left (C^2 - \frac{k}{{t}^{2\beta}} |x|^2 \right)^{\frac{1}{\gamma-1}}_+\,,
\end{equation}
where 
\[
\alpha = \frac{d}{d(\gamma-1)+2} \,, \quad \beta =\frac{\alpha}{d}\,, \quad k = \frac{\beta(\gamma-1)}{2\gamma}\,.
\]
The exact flow is given by
\[
\varphi(t,x) = \left( \frac{t}{t_0} \right)^\beta x\,.
\]
Note that this case falls outside the hypotheses of our theorem, due to lack of a positive lower bound on the density. Note also that since the solution has a compact support the choice of the domain $\Omega$, if sufficiently large, has no impact on the results. 

{
We solve the discrete system on the interval $[t_0,T]$ with $t_0=1/16$, $T=1$, and $C=1/2$, and using $\varepsilon = 10/N$ and $\tau = 10/N^2$. The initial conditions for the particle model are defined via equations \eqref{eq:rho0push}, \eqref{eq:initialcond0} and \eqref{eq:x0}, where $\Phi$ is a radial map from a reference ball of given radius (on which we set $\nu =1$) to the support of $\rho_0$, which can be computed explicitly from \eqref{eq:barenblatt}, and $\mc{T}_N$ is a Voronoi tessellation of the reference ball with $h_N \propto 1/\sqrt{N}$. For all tests we will monitor the weighted $l^2$ error of the flow at the final time $T$, defined as  
\begin{equation}\label{eq:deltaphi}
\Delta \varphi \coloneqq \frac{1}{M} \| X_N(T) - \varphi_N(T) \|_{m^0}\,,
\end{equation}
where $\varphi_N(t) = \varphi(t,X_N(0))$, $\varphi$ is the exact flow associated with the vector field $-\nabla U'(\rho)$, and where $M$ is the total mass.  We stress that up to a rescaling this is precisely the error in the flow which we introduced in the definition of the relative entropy \eqref{eq:relentropy}, and it can be regarded as an $\mc{O}(\delta_N)$ approximation of the standard $L^2$ error on the  Lagrangian flow as discussed in Remark \ref{rem:l2error}. The results in Figure \ref{fig:convergence} show generally a faster convergence than that predicted by Theorem \ref{th:convergence} but confirm a dependence of the convergence rates on the growth rate of the internal energy function $U$.}

\begin{figure}
\includegraphics[scale =.77,trim = 13 0 0 0, clip]{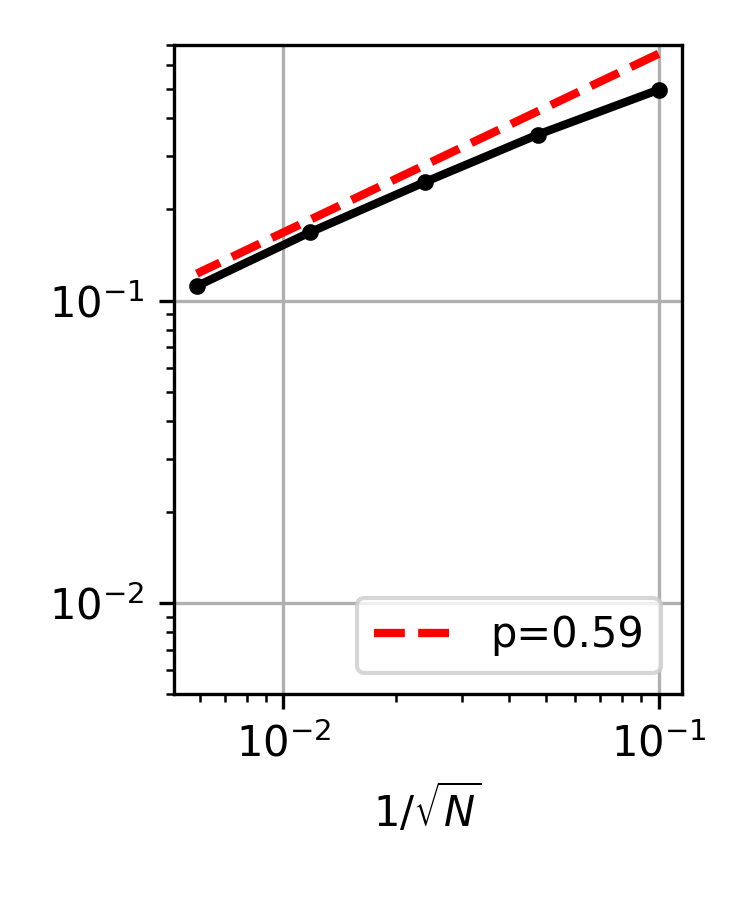}
\includegraphics[scale =.77,trim = 13 0 0 0, clip]{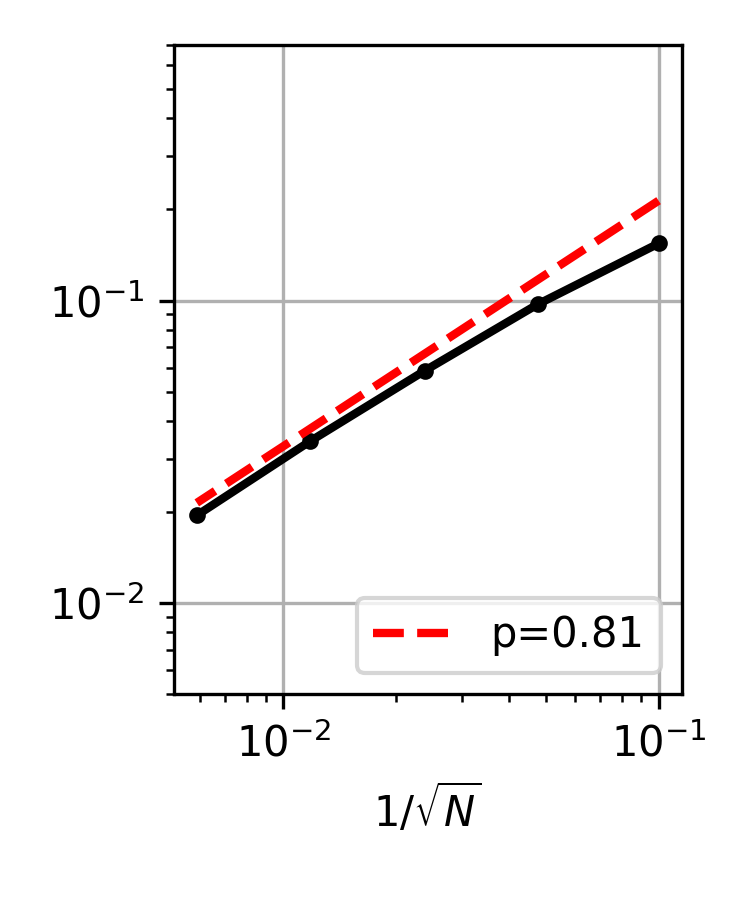}
\includegraphics[scale =.77,trim = 13 0 0 0, clip]{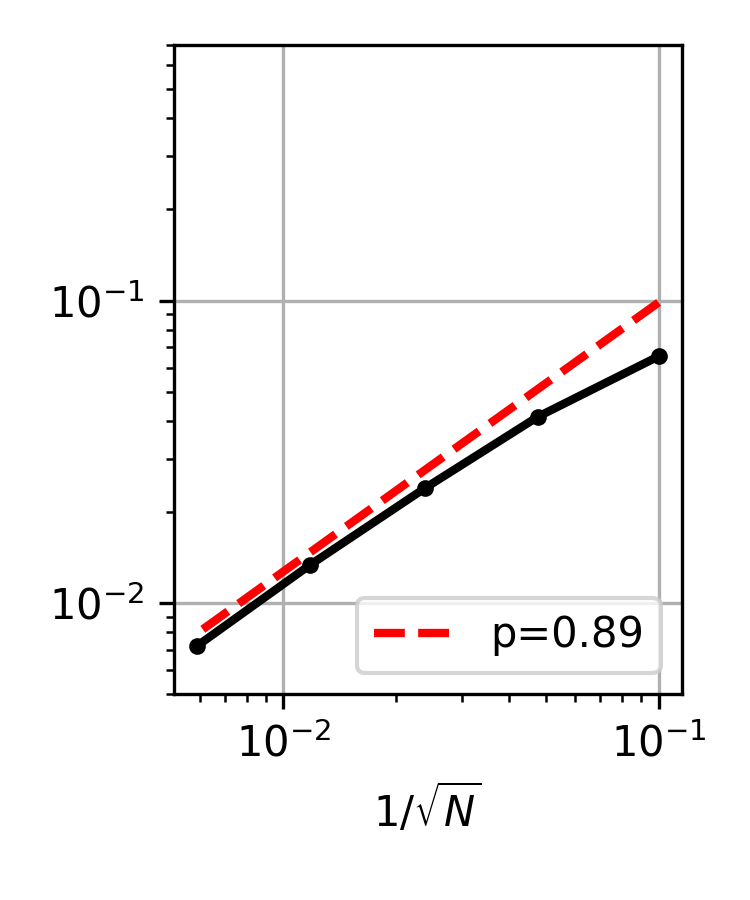}
\caption{{Error $\Delta \varphi$ defined in equation \eqref{eq:deltaphi} as a function of $1/\sqrt{N}$, for $\gamma = 1.5$ (left), $\gamma =2$ (center) and $\gamma=4$ (right).
The curves are compared to $h_N^p \sim N^{-p/2}$, corresponding to $p$th order of convergence, with $p$ evaluated over the last refinement step.
}\label{fig:convergence}}
\end{figure}

\subsubsection{Quadratic potential} We consider again the internal energy function $U(r)=r^2$, but with an additional quadratic potential $V(x) = |x - \bar{x}|^2/2$, driving the particles towards $\bar{x}\in \mathbb{R}^d$. As described in Section \ref{sec:external}, the discrete model is now defined by the system of ODEs \eqref{eq:odemod}, and we can apply the same time discretization strategy described in Section \ref{sec:timedisc}, which leads to the scheme
\[
x^{n+1}_i =  c^n_i +  \exp\left( -\lambda^n_i \tau\right) (x^{n}_i - c^{n}_i)\,,
\]
where
\[
\lambda^n_i = \frac{|L^n_i|}{m^0_i\varepsilon} +1 \quad \text{and}\quad
c^n_i = b^n_i + \frac{ \bar{x} - b^n_i}{\lambda^n_i}\,.
\]

In this case the density in the continuous model \eqref{eq:pdemod} converges exponentially towards the Barenblatt profile $\rho_\infty(x) = \max( (\frac{M}{2\pi})^{1/2} - \frac{1}{4}|x-\bar{x}|^2,0)$ where $M$ is the total mass. Here, we consider as initial condition a configuration where the particles are equally spaced within a cross of unit height and width, with barycenter at $\bar{x}=0$, and share the same mass, $m^0_i = M/N$ for all $i$. In particular we set $M=0.12$, $N =1.23 \cdot 10^4$, $\tau = 1/3\cdot 10^{-2}$, $\varepsilon = 2/3 \cdot 10^{-2}$. Figures \ref{fig:cross} and \ref{fig:crossenergy} show the particle distribution at different times and the energy evolution, respectively, and show the exponential decay of the density towards the equilibrium distribution.  Note, in particular, that at the steady state the size of the data attachement term in the discrete energy (i.e., the first term in $F_\varepsilon$) has roughly the same size as the difference between the exact energy and the one computed using the density reconstruction \eqref{eq:muNfun} (i.e., the second term in $F_\varepsilon$).

\begin{figure}
\includegraphics[scale =.65,trim = 15 0 50 0, clip]{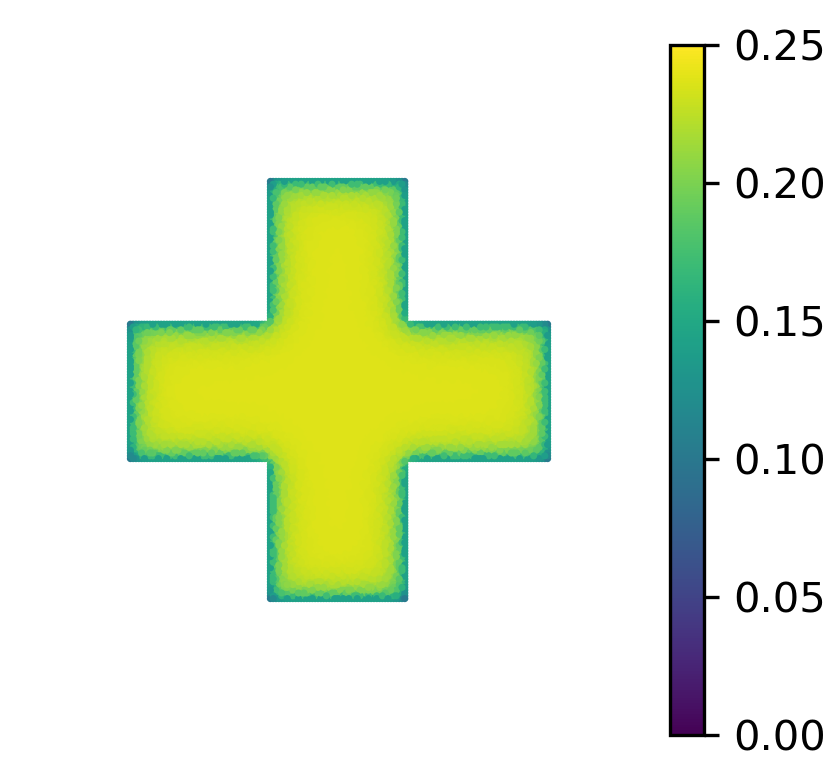}
\includegraphics[scale =.65,trim = 15 0 50 0, clip]{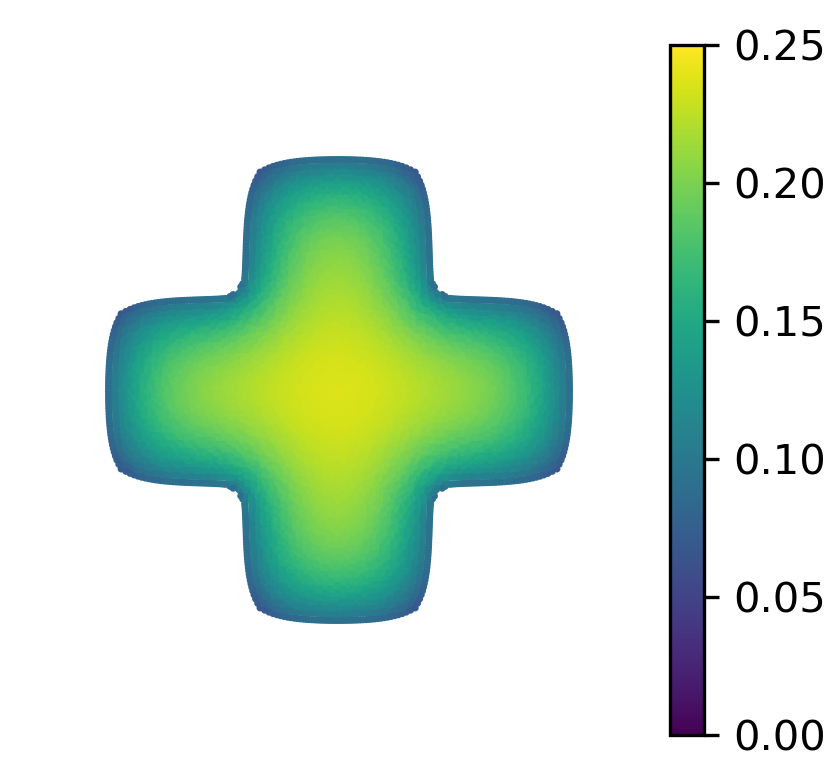}
\includegraphics[scale =.65,trim = 15 0 50 0, clip]{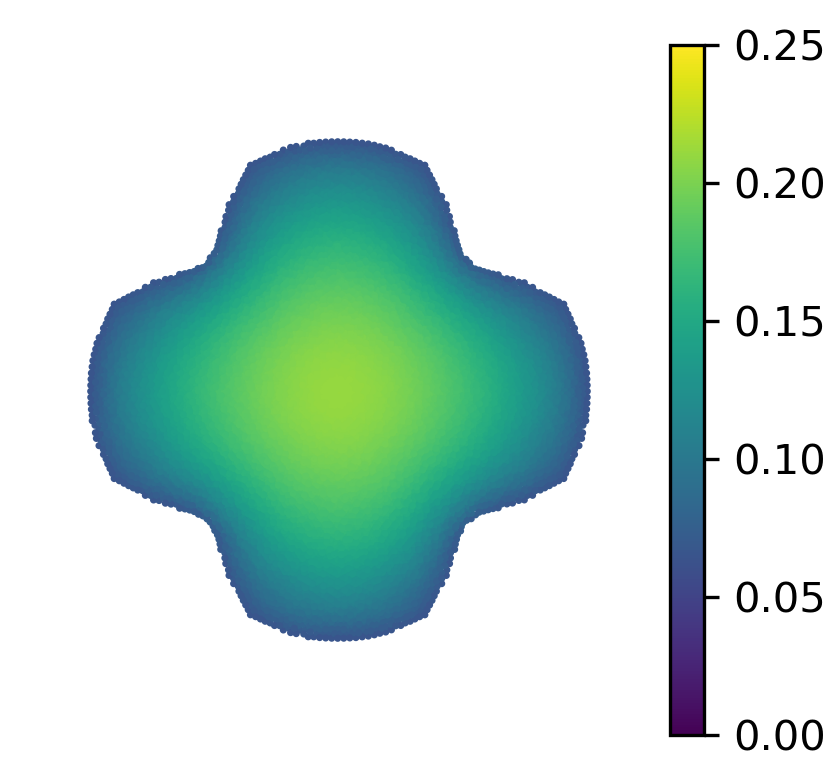}
\includegraphics[scale =.65,trim = 15 0 0 0, clip]{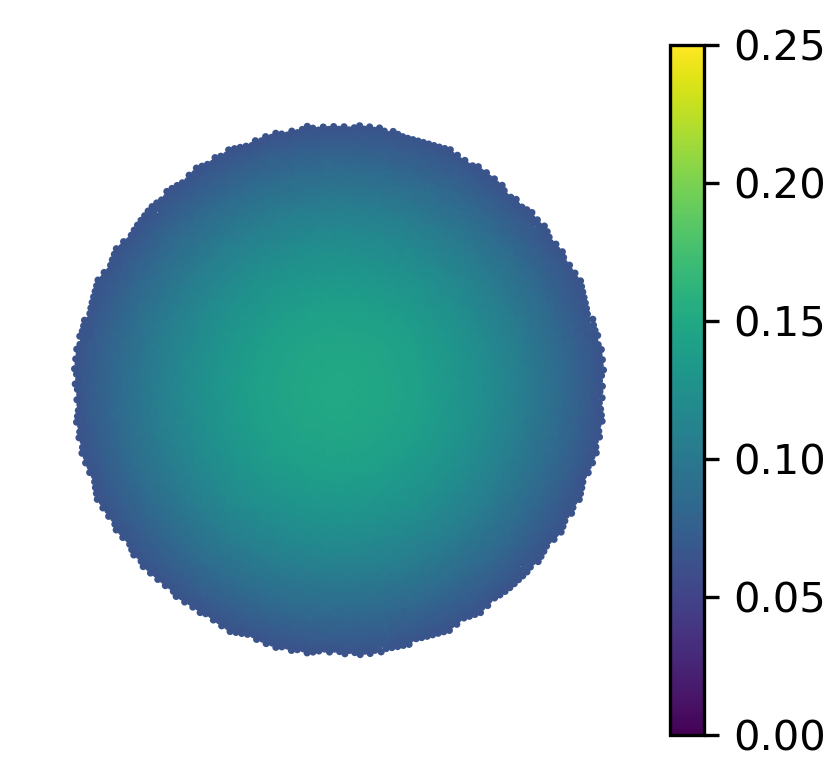}
\caption{Scatter plot of the particle positions at different times (from left to right, $t=0, 0.05, 0.2, 8$) for the quadratic potential test case. The color scale refers to the density, computed for each particle as $m^0_i/|L_i|$.}\label{fig:cross}
\end{figure}

\begin{figure}
\includegraphics[scale=.8,trim = 0 10 0 10, clip]{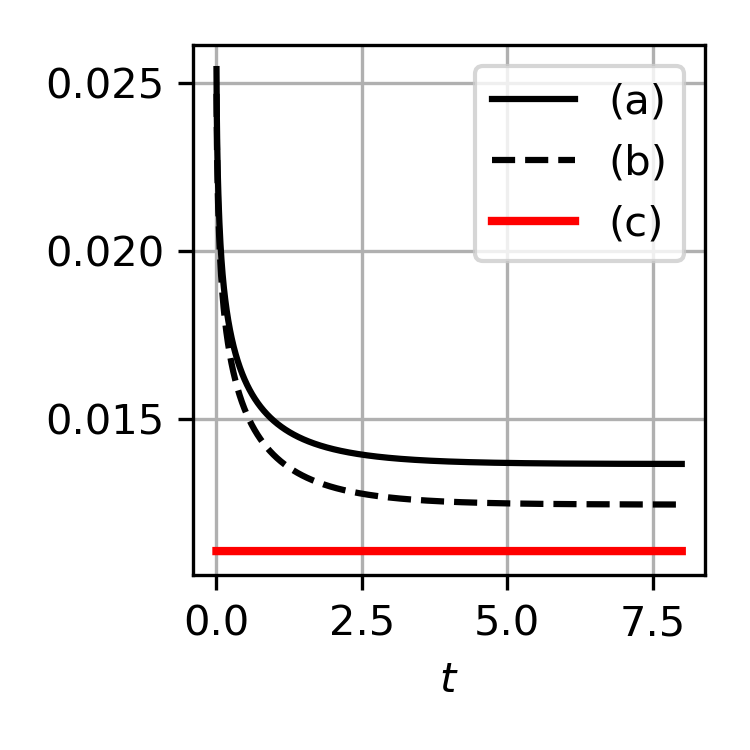}
\caption{Energy evolution for the quadratic potential test case: (a) $F_\varepsilon(X)$; (b) $\sum_i U(m^0_i/|L_i|)|L_i|$; (c) internal energy of the equilibrium density $\int U(\rho_\infty)$.}\label{fig:crossenergy}
\end{figure}

\section*{Acknowledgements}
This work was partly supported by the Labex CEMPI (ANR-11-LABX-0007-01).

\appendix
\section{Lagrangian formulation of porous media and link with the discrete model}
\label{sec:applag}
In this section we describe formally the gradient flow structure of the porous medium equation \eqref{eq:pde0} in Lagrangian variables \cite{evans2005diffeomorphisms}. At the Eulerian level, this corresponds to the Wasserstein gradient flow formulation originally put forward by Otto \cite{otto2001geometry}.

Let us denote by $\mc{M}_+(\Omega)$ the set of positive measures on $\Omega$.  Given a measurable map $\Psi: \Omega \rightarrow \mathbb{R}^d$, and a measure $\rho \in \mc{M}_+(\Omega)$ the pushforward of $\rho$ by $\Psi$ is the measure $\Psi_\# \rho \in \mc{M}_+(\mathbb{R}^d)$, satisfying
\[
\int_{\mathbb{R}^d} f \ed \Psi_\# \rho = \int_\Omega f(\Psi(x)) \ed \rho (x)
\]
for all $f \in C_0(\mathbb{R}^d)$, the space of continuous functions vanishing at infinity.

Consider now two reference measures $\mu,\nu \in \mc{M}_+(\Omega)$ with smooth and strictly positive densities with respect to the Lebesgue measure on $\Omega$, denoted $\mathrm{Leb}$ in the following, and let us define the  energy $\mc{F}:L^2_{\nu}(\Omega;\mathbb{R}^d) \rightarrow \mathbb{R}$ by
\[
\mc{F}(\varphi) = \left\{
\begin{array}{ll} \displaystyle
\int_\Omega U \left(\frac{\ed \varphi_\#\nu}{\ed \mu}\right) \ed \mu & \text{ if }  \varphi_\#\nu \ll \mu\,, \\
+ \infty   &\text{otherwise}\,.
\end{array}
\right.
\]
The gradient of this energy with respect to the $L^2_{\nu}$ metric, at a given configuration $\varphi \in\mathrm{Diff}(\Omega)$, can be defined as follows. Consider a smooth curve $ (-\varepsilon,\varepsilon)\ni s  \rightarrow \varphi(s) \in \mathrm{Diff}(\Omega) \subset L^2_{\nu}(\Omega;\mathbb{R}^d)$ such that $\varphi(0) = \varphi$ and $\ed \varphi(s) /\ed s |_{s=0} = \delta \varphi $. Then,
\[
\begin{aligned}
\langle \nabla_{L^2_{\nu}} \mc{F}(\varphi), \delta \varphi \rangle& =  \frac{\ed}{\ed s}\Big|_{s=0} \mc{F}(\varphi(s)) \\ &=
\int U' \left( \frac{\ed \varphi_\# \nu}{\ed \mu} \right) \frac{\ed}{\ed s}\Big|_{s=0} \frac{\ed \varphi(s)_\# \nu}{\ed \mu} \ed \mu \\
&=
\frac{\ed}{\ed s}\Big|_{s=0} \int U' \left( \frac{\ed \varphi_\# \nu}{\ed \mu} \right) \circ \varphi(s) \ed \nu \\
&= 
\Big\langle \nabla U'\left( \frac{\ed \varphi_\# \nu}{\ed \mu} \right) \circ \varphi , \delta \varphi \Big\rangle\,,
\end{aligned}
\]%
where $\langle \cdot,\cdot\rangle$ denotes the inner product on $L^2_{\nu}(\Omega;\mathbb{R}^d)$. Then, we can interpret any smooth curve of diffeomorphisms $\varphi:[0,T] \rightarrow \mathrm{Diff}(\Omega)$ satisfying
\[
\left\{
\begin{array}{l}\displaystyle
\partial_t \varphi(t) = - \nabla U'\left( \frac{\ed [\varphi(t)_\# \nu]}{\ed \mu} \right) \circ \varphi(t)\,,\\
\varphi(0) = \Phi\,,
\end{array}
\right.\,
\]
as the gradient flow of $\mc{F}$ with respect to the $L^2_{\nu}$ metric, starting at $\Phi \in \mathrm{Diff}(\Omega)$.
Denoting $\rho(t) = \varphi(t)_\# \nu$ and $\rho^0 = \Phi_\#\nu$, then $\rho(t)$ statisfies the continuity equation with velocity field $-\nabla U'\left( \ed \rho(t)/\ed \mu \right)$ tangent to the boundary, i.e. 
\[
\left\{ \begin{array}{l}
\displaystyle \partial_t \rho(t) - \mathrm{div}\left( \rho(t) \nabla U'\left( \frac{\ed \rho(t)}{\ed \mu} \right)  \right) = 0\,,\\
\displaystyle \nabla U'\left( \frac{\ed \rho(t)}{\ed \mu} \right)\cdot n_{\partial \Omega} = 0 \,,\\
\rho(0) = \rho^0\,.
\end{array}\right.\,.
\]
In order to link this formulation with the discrete model, for any diffeomorphism $\varphi \in \mathrm{Diff}(\Omega)$, let us denote 
\[
\lambda \coloneqq \frac{\ed \varphi^{-1}_\#\mu}{\ed \mu}\,,
\]
or equivalently $\varphi_\#(\lambda \mu) = \mu$. Then, 
\[
\varphi_\# \nu = \varphi_\# \left[ \frac{1}{\lambda }\frac{\ed\nu}{\ed \mu}  \lambda \mu   \right] = \left(\frac{1}{\lambda }\frac{\ed\nu}{\ed \mu} \right)\circ \varphi^{-1} \mu\,,
\]
which implies
\[
\mc{F}(\varphi) = \int U\left(\frac{1}{\lambda }\frac{\ed\nu}{\ed \mu} \right) \lambda \ed \mu\,.
\]
This suggests the definition of the following regularised energy
\[
\mc{F}_\varepsilon(\varphi) \coloneqq \inf_\lambda \frac{W^2_2(\varphi_\#(\lambda \mu),\mu)}{2\varepsilon} +\int U\left(\frac{1}{\lambda }\frac{\ed\nu}{\ed \mu} \right) \lambda \ed \mu\,.
\]
Let $\mc{T}=\{T_i\}_i$ be a fixed tessellation of $\Omega$, and for any given vector $X=(x_i)_i \in (\mathbb{R}^d)^N$ consider the piece-wise constant map $\varphi^X: \Omega \rightarrow \mathbb{R}^d$ such that $\varphi^X(x) = x_i \in \mathbb{R}^d$ for a.e.\ $x \in T_i$. Then, 
\[
\varphi^X_\# \lambda \mu = \sum_i a_i \delta_{x_i} \quad \text{where} \quad a_i = \int_{T_i} \lambda \ed \mu\,,
\]
and
\begin{equation}\label{eq:Fepslamb}
\mc{F}_\varepsilon(\varphi^X) \coloneqq \inf_\lambda \frac{W^2_2(\sum_i a_i \delta_{x_i},\mu)}{2\varepsilon} +\int U\left(\frac{1}{\lambda }\frac{\ed\nu}{\ed \mu} \right) \lambda \ed \mu\,.
\end{equation}
Optimizing over $\lambda$ we find that for any cell $T_i$ there exist a constant $c_i$ such that almost everywhere on $T_i$,  $\ed \nu /\ed \lambda = c_i \lambda $. This implies that
\[
m_i \coloneqq \int_{T_i} \ed \nu = c_i \int_{T_i} \lambda \ed \mu = c_i a_i\,,
\]
which replaced into \eqref{eq:Fepslamb} gives
\[
\mc{F}_\varepsilon(\varphi^X) \coloneqq \inf_a \frac{W^2_2(\sum_i a_i \delta_{x_i},\mu)}{2\varepsilon} +\sum_i U\left(\frac{m_i}{a_i} \right) a_i\,.
\]
In the case where $\mu = \mathrm{Leb}$ this coincides with $F_\varepsilon(X)$ with $\mathbb{L}_N = \mathbb{T}_N(\Omega)$.

In the case where $\varphi:\Omega\rightarrow \tilde{\Omega} \neq \Omega$ the constraint $\varphi_\#(\lambda \mu)= \mu$ is not appropriate, as in this case we only have
\[
\frac{\ed \varphi_\# \lambda \mu}{\ed \mu} = \mathbf{1}_{\tilde{\Omega}}\,.
\]
Hence, we define the regularized energy by
\[
\mc{F}_\varepsilon(\varphi) \coloneqq \inf_{\lambda, \frac{\ed \eta}{\ed \mu}\leq 1} \frac{W^2_2(\varphi_\#(\lambda \mu), \eta)}{2\varepsilon} +\int U\left(\frac{1}{\lambda }\frac{\ed\nu}{\ed \mu} \right) \lambda \ed \mu\,.
\]
In the case where $\mu =\mathrm{Leb}$, by similar computations as above, we find $\mc{F}_\varepsilon(\varphi^X) = F_\varepsilon(X)$, with $\mathbb{L}_N= \mathbb{T}_N^s(\Omega)$. 

Finally, let us denote by $\mathbb{F}_N \coloneqq \{ \varphi\in {L}^2_\nu (\Omega;\mathbb{R}^d)~:~ \varphi(x)=x_i \in\mathbb{R}^d ~\text{ for a.e. } x \in T_i   \}$ the space of piecewise constant flows on the reference tessellation equipped with the $L^2_\nu$ metric. In both the cases described above, the discrete dynamics in \eqref{eq:gfdiscrete} coincides with the gradient flow
\[
\partial_t {\varphi}^X = - \nabla_{\mathbb{F}_N} \mc{F}_\varepsilon(\varphi^X)\,.
\]

\begin{remark}[Generalizations to models with advected quantites]\label{rem:generalizations} The type of energy regularization considered here can be easily generalized to models where multiple scalar functions and densities are advected by the flow, i.e. to the case where
\[
\mc{F}(\varphi) = \int U\left(a_1 \circ \varphi^{-1}, \ldots, a_n \circ \varphi^{-1}, \frac{\ed \varphi_\# \nu_1}{\ed \mu}, \ldots , \frac{\ed \varphi_\# \nu_m}{\ed \mu} \right)  \ed \mu\,,
\]
where now $U:\mathbb{R}^{n+m} \rightarrow \mathbb{R}$, $a_i:\Omega \rightarrow \mathbb{R}$ and $\nu_j \in\mc{P}(\Omega)$  are given scalar functions and probability measures, respectively. In fact, as before, this can be written as a single function of $\lambda$, since by a change of variables
\[
\mc{F}(\varphi) = \int U\left(a_1, \ldots, a_n, \frac{1}{\lambda} \frac{\ed \nu_1}{\ed \mu}, \ldots , \frac{1}{\lambda} \frac{\ed \nu_m}{\ed \mu} \right)  \lambda \ed \mu\,.
\]
Formally, writing the Hamiltonian equations corresponding to such energies,
\begin{equation}\label{eq:so}
\partial_{tt}^2 \varphi(t) = - \nabla_{\mathbb{F}} \mc{F}(\varphi(t))\,,
\end{equation}
one recovers, with appropriate choices of $U$, a large class of compressible fluid models including, e.g., the thermal shallow water equations or the full compressible Euler equations (see, e.g., \cite{holm1998euler,khesin2021geometric}). 
Then the same discretization strategy described in this section leads naturally to simple Lagrangian schemes for all of these models as well. 
\end{remark}

\section{Optimal transport tools and proof of Proposition \ref{prop:gradient}}\label{sec:otapp}

\subsection{Optimal transport} Given two positive measures $\rho,\mu \in \mc{M}_+(\Omega)$ with fixed total mass $\rho[\Omega] = \mu[\Omega]$, the $L^2$ Wasserstein distance between $\rho$ and $\mu$, is defined via the following minimization problem
\begin{equation}\label{eq:w2}
W^2_2(\rho,\mu) = \min \left \{ \int_{\Omega\times \Omega }|x-y|^2 \ed \gamma(x,y) \,;\quad \gamma \in \Pi(\rho,\mu) \right\} \,,
\end{equation}
where $\Pi(\rho,\mu)$ is the set of coupling plans $\gamma\in\mc{M}_+(\Omega\times \Omega)$  satisfying
\[
\int_{\Omega\times \Omega } \psi(x) \ed \gamma(x,y) = \int_\Omega \psi(x) \ed \rho(x)\,,\quad\int_{\Omega\times \Omega } \psi(y) \ed \gamma(x,y) = \int_\Omega \psi(y) \ed \mu(y)\,,
\]
for all functions $\psi \in C(\Omega)$. Problem \eqref{eq:w2} always admits at least one solution $\gamma$, and we call this optimal transport plan from $\rho$ to $\mu$. 

Semi-discrete optimal transport refers to the case one of the two measures  is discrete and the other  is absolutely continuous. By similar arguments to those used in Section, \eqref{eq:w2} admits a dual formulation which can be expressed in terms of Laguerre tessellations. Suppose that $\mu = \sum_i m_i \delta_{x_i}$ where $X=(x_i)_i\in (\mathbb{R}^d)^N$ and $m_i>0$, and that $\rho$ is absolutely continuous, then
\begin{equation}\label{eq:w2dual}
W^2_2(\rho,\mu) = \max_{w\in \mathbb{R}^N} \sum_i\left( \int_{L_i(X,w)} \left( |x-x_i|^2 -w_i \right) \ed \rho(x) + w_i m_i \right),
\end{equation}
see, e.g., Section 4.1 in \cite{merigot2021optimal}. The maximum is always attained and the maximizer $w\in\mathbb{R}^N$ is related to the optimal plan $\gamma$ by
\[
\int_{\Omega \times \Omega} \psi(x,y) \ed \gamma(x,y) = \sum_i \int_{L_i(X,w)} \psi(x,x_i) \ed \rho(x)\, \quad \forall\, \psi \in C(\Omega\times \Omega)\,.
\]

\subsection{Energy reformulation}\label{sec:reformulation} Let us show the equivalence between \eqref{eq:FepsX} and \eqref{eq:FepsXw2}. Suppose that $X\in \mathbb{R}^d\setminus \Delta_N$. Then, by definition of the $W_2$ distance,
\[
F_\varepsilon(X) \geq  \inf_{a\in \mathbb{R}^N_{> 0},\eta \in \mc{C}} \frac{W_2^2 \Big( \sum_{i} a_i\delta_{x_i}, \eta \Big)}{2\varepsilon} + \sum_i C_i(a_i) \eqqcolon E_\varepsilon(X)\,,
\]
where $F_\varepsilon(X)$ is given by \eqref{eq:FepsX}, and where $\mathcal{C} = \{\mathrm{Leb}\}$ if $\mathbb{L}_N(\Omega)= \mathbb{T}_N(\Omega)$, and $\mathcal{C} = \{ f\ed x\,:\, f:\Omega\rightarrow [0,1] \}$ if $\mathbb{L}_N(\Omega) = \mathbb{T}_N^s(\Omega)$. Using the dual formulation \eqref{eq:w2dual} and exchanging inf and sup we find
\[
E_\varepsilon(X) \geq  \sup_{w\in\mathbb{R}^N} \inf_{a\in \mathbb{R}^N_{> 0},\eta \in \mc{C}} \sum_i \left(\int_{L_i(X,w)} \frac{|x-x_i|^2-w_i}{2\varepsilon} \ed \eta(x) + \frac{w_i}{2\varepsilon} a_i + C_i(a_i) \right)\,.
\]
Optimizing over $\eta$ and $a$, we find that the right-hand side is equal to $D_\varepsilon(X)= F_\varepsilon(X)$ and thereofore $F_\varepsilon(X) = E_\varepsilon(X)$.

\subsection{Proof of Proposition \ref{prop:gradient}}\label{sec:proofgradient}
Let $X,Y\in (\mathbb{R}^d)^N\setminus \Delta_N$ and $\mc{L}\in \mathbb{L}_N(\Omega)$ the optimal tessellation associated with $X$. Then
\[
\begin{aligned}
F_\varepsilon(Y)& \leq \sum_i \int_{L_i}\frac{|x-x_i+x_i-y_i |^2}{2\varepsilon} + C(|L_i|) \\
& \leq F_\varepsilon(X) + \langle G_{\varepsilon}(X), Y-X \rangle_{m^0} + |\Omega| \sum_i\frac{|x_i-y_i|^2}{2\varepsilon}\,,
\end{aligned}
\]
where 
\[
(G_\varepsilon(X))_i \coloneqq \frac{|L_i|}{m^0_i} \frac{x_i- b_i}{\varepsilon}\,, \quad b_i \coloneqq \frac{1}{|L_i|} \int_{L_i} x \, \ed x\,.
\]
This shows that $G_\varepsilon(X) \in \partial^+ F_\varepsilon(X)$, the Fréchet superdifferential of $F_\varepsilon$ at $X$. We now prove that $G_\varepsilon(X)$ is continuous, which is implies that $F_\varepsilon(X)$  is $C^1$ and that $G_\varepsilon(X)$ is its gradient at $x$ with respect to the inner product $\langle\cdot,\cdot\rangle_{m^0}$.  For this, we will use the following expression for $F_\varepsilon(X)$ (shown in Section \ref{sec:reformulation}):
\begin{equation}\label{eq:Fwas}
F_\varepsilon(X) = \min_{a\in\mathbb{R}^N, \eta \in \mathcal{C}} \frac{W^2_2(\eta,\sum_i a_i \delta_{x_i})}{2\varepsilon} + \sum_i C_i(a_i) 
\end{equation}
where $\mc{C}$ is a convex subset of $\mc{M}(\Omega)$: in particular $\mathcal{C} = \{\mathrm{Leb}\}$ if $\mathbb{L}_N(\Omega)= \mathbb{T}_N(\Omega)$, and $\mathcal{C} = \{ f\ed x\,:\, f:\Omega\rightarrow [0,1] \}$ if $\mathbb{L}_N(\Omega) = \mathbb{T}_N^s(\Omega)$. If $X\in (\mathbb{R}^d)^N\setminus \Delta_N$, then problem \eqref{eq:Fwas} admits a unique solution which is linked to the solution $\mathcal{L}(X)= \{L_i(X)\}_i$ of problem \eqref{eq:FepsX} by
\[
\eta(X) = \sum_i \bs{1}_{L_i(X)}\ed x \,,\quad a_i(X) = |L_i(X)|\,.
\]
Since the function minimized in \eqref{eq:Fwas} is continuous with respect to  $X$, $a$ and $\eta$ (with respect to the narrow topology) on the set $\{(a,\eta)\, :\,\sum_i a_i = \eta[\Omega], ~a_i>0 ~ \forall \,i\}$, then the optimal $\eta(X)$ and $a_i(X)$ are continuous functions of $X$ on $(\mathbb{R}^d)^N\setminus \Delta_N$. In particular, given a sequence $(X^n)_n\subset(\mathbb{R}^d)^N\setminus \Delta_N$, such that $X^n =(x^n_i) \rightarrow X\in(\mathbb{R}^d)^N\setminus \Delta_N$ for $n\rightarrow \infty$, we have
\[
\eta(X^n) \rightharpoonup \eta(X) \,, \quad \sum_ia_i(X^n) \delta_{x^n_i} \rightharpoonup \sum_i a_i(X) \delta_{x_i} \,.
\]
Denoting by $\gamma^n \in \mathcal{M}(\mathbb{R}^d,\mathbb{R}^d)$ the optimal transport plan from $\eta(X^n)$ to $\sum_ia_i(X^n) \delta_{x^n_i}$, by the stability of optimal transport plans $\gamma^n \rightharpoonup \gamma$, the optimal plan from $\eta(X)$ to $\sum_ia_i(X) \delta_{x}$.  Now, since $X\in(\mathbb{R}^d)^N\setminus \Delta_N$, we can fix $\varepsilon>0$ sufficiently small, so that that $|x^n_i -x^n_j|\geq 3\varepsilon$ for all $i\neq j$, and $|x_i -x^n_i|\leq \varepsilon$, for all $n\geq n_\varepsilon$ with $n_\varepsilon$ sufficiently large. For any $\delta>0$, let us denote by $B(x_i,\delta)$ the closed ball of radius $\delta$ centered at $x_i$, and consider a continuous function $\psi:\mathbb{R}^d \rightarrow \mathbb{R}$ such that $\psi(x)=1$ for $x\in B(x_i,\varepsilon)$, and $\psi(x)=0$ for $x\in \mathbb{R}^d\setminus B(x_i,2\varepsilon)$. Then, taking $n\geq n_\varepsilon$, 
\[
\int_{L_i(X^n)} x\ed x = \int_{\mathbb{R}^d\times \mathbb{R}^d} x \psi(y) \ed \gamma^n(x,y) \rightarrow \int_{\mathbb{R}^d\times \mathbb{R}^d} x \psi(y) \ed \gamma(x,y) = \int_{L_i(X)} x\ed x\,,
\] 
for $n\rightarrow +\infty$,
which shows that $G_\varepsilon(X)$ is continuous.

\section*{
Data Availability Statement}

The code associated with this article is available at the github repository \url{https://github.com/andnatale/gradient_flows_of_interacting_cells}.

\bibliographystyle{plain}      
\bibliography{refs}   

\end{document}